\documentclass[reqno,12pt]{amsart}

\usepackage[margin=1in]{geometry}
\usepackage{amsmath,amssymb,amsthm,mathtools}
\usepackage{bm}
\usepackage{enumitem}
\usepackage[hidelinks]{hyperref}
\usepackage[nameinlink,capitalize]{cleveref}
\usepackage{microtype}

\hypersetup{
  pdftitle={Lewy-Type Nondegeneracy for Gradient-Dependent Elliptic Equations and Global Gradient Diffeomorphisms on Convex Rings},
  pdfauthor={Da Liu, Shujun Shi},
  pdfsubject={Lewy-type nondegeneracy and global gradient diffeomorphisms for gradient-dependent elliptic equations},
  pdfkeywords={Lewy theorem, Hessian nondegeneracy, gradient homeomorphism, gradient-dependent elliptic equation, convex ring, level-set convexity}
}

\allowdisplaybreaks
\newtheorem{theorem}{Theorem}[section]
\newtheorem{proposition}[theorem]{Proposition}
\newtheorem{lemma}[theorem]{Lemma}
\newtheorem{corollary}[theorem]{Corollary}
\newtheorem{remark}[theorem]{Remark}

\newcommand{\R}{\mathbb R}

\newcommand{\Sph}{\mathbb S}
\newcommand{\Sym}{\operatorname{Sym}}
\newcommand{\tr}{\operatorname{tr}}
\newcommand{\rank}{\operatorname{rank}}

\newcommand{\cA}{\mathcal A}
\newcommand{\cQ}{\mathcal Q}
\newcommand{\eps}{\varepsilon}
\newcommand{\dd}{\,\mathrm d}
\numberwithin{equation}{section}
\newcommand{\autotag}{\refstepcounter{equation}\tag{\theequation}}

\title[Lewy-Type Nondegeneracy for Gradient-Dependent Elliptic Equations]{Lewy-Type Nondegeneracy for Gradient-Dependent Elliptic Equations
	and Global Gradient Diffeomorphisms on Convex Rings}
	
\author{Da Liu}
\address{School of Mathematical Sciences, Harbin Normal University, Harbin 150025, Heilongjiang Province, China.}
\email{2024400034@stu.hrbnu.edu.cn}

\author{Shujun Shi}
\address{School of Mathematical Sciences, Harbin Normal University, Harbin 150025, Heilongjiang Province, China.}
\email{shjshi@hrbnu.edu.cn}
\date{}

\begin{document}

\begin{abstract}
We prove two complementary Lewy-type theorems for elliptic equations whose
coefficients depend only on the gradient.  First, let $\Omega\subset\R^3$ and let
$u$ solve
\[
 a^{ij}(Du)u_{ij}=0,
\]
where $a$ is a smooth, symmetric, positive definite matrix field on an open
set containing $Du(\Omega)$.  We show that if the gradient map $Du$ is a local
homeomorphism, then $\det D^2u$ never vanishes; hence $Du$ is a local
$C^\infty$-diffeomorphism.  Second, in every dimension, we consider
capacitary solutions on convex rings.
If the solution has no critical points and its level hypersurfaces are strictly
convex, then ellipticity alone forces the Hessian to have one positive and
$n-1$ negative eigenvalues, that is, inertia $(1,n-1)$.
Consequently, the gradient is a global diffeomorphism onto a radially parametrized
ring in gradient space.  Both results apply to the $p$-Laplace and
minimal surface equations.  For the global minimal-surface result, existence
of a smooth solution is assumed.  Classical convex-ring results supply the noncriticality and strict
level-set convexity needed in the global corollaries.  Since the two coefficient
matrices are real analytic on the relevant gradient ranges, the corresponding
local and global gradient diffeomorphisms are real analytic.
\end{abstract}

\medskip
\subjclass[2020]{Primary 35J62; Secondary 35B50, 35B65, 35J92, 35J93.}
\keywords{Lewy theorem; Hessian nondegeneracy; gradient homeomorphism;
gradient-dependent elliptic equation; convex ring; level-set convexity}

\maketitle
\section{Introduction}
The relation between topological injectivity and analytic nondegeneracy is one of
the oldest themes in the theory of harmonic mappings.  In 1936, Lewy proved
that a one-to-one harmonic mapping in the plane has nonvanishing Jacobian
\cite{Lewy1936}.  Thus a topological homeomorphism whose coordinate functions are
harmonic is automatically a local real-analytic diffeomorphism.  This theorem is
closely related to, but logically distinct from, the Rad\'o--Kneser--Choquet
theorem: the latter supplies global injectivity from convex boundary data, whereas
Lewy's theorem gives differential nondegeneracy under an injectivity assumption.

For a harmonic function $u$, the natural associated mapping is its gradient
$Du$.  In dimension three, Lewy established the remarkable analogue that a
homeomorphic harmonic gradient has nonvanishing Hessian determinant
\cite{Lewy1968}.  The dimension is essential in the known gradient argument.
Gleason and Wolff developed a higher-dimensional theory, isolated an eigenvalue
form of the maximum principle behind the three-dimensional result, and obtained
conclusions under additional spectral assumptions \cite{GleasonWolff}.  For
general vector-valued harmonic homeomorphisms, the planar Lewy conclusion already
fails in higher dimension, as Wood showed \cite{Wood}.  Thus the gradient
structure is more rigid than an arbitrary harmonic mapping, but its direct
nondegeneracy property remains strongly dimension dependent.

The corresponding nonlinear question is whether local injectivity of the
gradient of a solution to a quasilinear equation forces Hessian nondegeneracy.
A recent result of
Li, Liu and Ma proves a local Lewy theorem for $p$-harmonic functions in $\R^3$
away from the critical set \cite{LiLiuMa}.  Their proof divides a singular Hessian
according to its rank.  The rank-two case is treated through a partial Legendre
transform, while the rank-zero case combines analytic blow-up with the
three-dimensional harmonic-gradient theory and a zero-Hessian theorem for ternary
forms.  Partial Legendre transforms have a long history in degenerate
Monge--Amp\`ere equations; useful modern treatments include
\cite{Guan1997,GuanSawyer,RiosSawyerWheeden}.

The passage from the $p$-Laplace coefficient to an arbitrary positive definite
field $a(Du)$ is not formal.  In the rank-two case, the transformed equation
must be reorganized so that ellipticity enters only through the congruence
factorization \eqref{eq:intro-symbol} below, while in the rank-zero case strong
unique continuation replaces the analytic expansion used in \cite{LiLiuMa}.

The first purpose of this paper is to identify a natural gradient-dependent
equation class for which that rank argument works.  We consider
\begin{equation}\label{eq:main}
 a^{ij}(Du)u_{ij}=0
 \qquad\text{in }\Omega\subset\R^n,
\end{equation}
where $a(q)$ is a smooth symmetric positive definite matrix field.  Unless
otherwise stated, repeated indices are summed from $1$ to $n$.  Let $\Sym_n$
denote the space of all $n\times n$ real symmetric matrices.  For
$a\in\Sym_n$, the notation $a>0$ means that $a$ is positive definite.
In H\"older spaces, the exponent $\alpha$ is understood to satisfy $0<\alpha<1$.
Our first main theorem is the following.

\begin{theorem}[Local Lewy theorem]\label{thm:local-intro}
Let $\mathcal U\subset\R^3$ be open and let
$a\in C^\infty(\mathcal U,\Sym_3)$ satisfy $a(q)>0$ for every
$q\in\mathcal U$.  Let $\Omega\subset\R^3$ be a domain and let
$u\in C^{2,\alpha}_{\mathrm{loc}}(\Omega)$ solve \eqref{eq:main} for $n=3$, with
$Du(\Omega)\subset\mathcal U$.  If
\[
 Du:\Omega\longrightarrow Du(\Omega)
\]
is a local homeomorphism, then
\[
 \det D^2u(x)\ne0\qquad\text{for every }x\in\Omega.
\]
Consequently $Du$ is a local $C^\infty$-diffeomorphism.  If $Du$ is globally
one-to-one onto its image, then it is a global $C^\infty$-diffeomorphism onto its
image.
\end{theorem}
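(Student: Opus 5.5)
We argue by contradiction, following the rank decomposition of \cite{LiLiuMa}. Suppose $x_0\in\Omega$ with $\det D^2u(x_0)=0$. First we note regularity. Since $a$ is smooth and $u\in C^{2,\alpha}_{\mathrm{loc}}$, differentiating \eqref{eq:main} shows that each $u_k$ solves the linear equation
\[
a^{ij}(Du)\,(u_k)_{ij}+\partial_{q_l}a^{ij}(Du)\,u_{ij}\,(u_k)_l=0
\]
with Hölder coefficients. Schauder bootstrapping then gives $u\in C^\infty(\Omega)$. After an affine change of variables $x\mapsto Px$ and subtraction of a linear function (which replaces $a(q)$ by $P^{-1}a(P^{-T}q)P^{-T}$ and preserves the hypotheses), we may assume $Du(x_0)=0$ and $a(0)=I$. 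We then split into cases according to $r=\rank D^2u(x_0)\in\{0,1,2\}$.

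\textbf{Rank one.} Write $D^2u(x_0)=\lambda e_3\otimes e_3$ with $\lambda\neq 0$. Evaluating the equation at $x_0$ gives $a^{33}(0)\lambda=0$, which contradicts positivity. The same trace argument rules out every semidefinite nonzero Hessian. In particular, in rank two the two nonzero eigenvalues must have opposite signs, relative to the congruence factorization of $a(Du(x_0))$.

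\textbf{Rank two.} Say $u_{33}(x_0)=0$ and the $2\times2$ block in $(x_1,x_2)$ is nondegenerate. We apply the partial Legendre transform in $(x_1,x_2)$: set $y'=(u_1,u_2)$, $y_3=x_3$, and $u^*=x'\cdot y'-u$. Then $u^*_{33}=-u_{33}+(\ldots)$ is governed by the Schur complement, so $\det D^2u=0$ exactly where $u^*_{33}=0$. We then rewrite \eqref{eq:main} as a quasilinear equation for $u^*$ whose coefficients depend on $Du^*$ and on $D^2_{y'}u^*$. The key is to organize the transformed operator through the congruence $a=\Lambda^T\Lambda$ applied to the block structure, so that ellipticity is used only via that factorization. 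Differentiating in $y_3$ shows that $w=u^*_{33}$, or a suitable combination of $u^*_3$ and its derivatives, satisfies a linear elliptic equation. The local-homeomorphism hypothesis forces $y_3\mapsto u^*_3(y',y_3)$ to be monotone for fixed $y'$, since $Du$ corresponds to $(y',-u^*_3)$. Hence $w$ has a fixed sign and vanishes at the image of $x_0$. The strong maximum principle then gives $w\equiv0$ nearby. That means $u^*_3$ is independent of $y_3$, which contradicts local injectivity.

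\textbf{Rank zero.} Here $D^2u(x_0)=0$. Take the lowest-order nonvanishing homogeneous part of $Du-Du(x_0)$, of degree $m\ge2$. Such a part exists by strong unique continuation for the linear equation satisfied by each $u_k$, which has smooth elliptic coefficients and replaces analyticity. Blowing up, $u_m$ is a harmonic homogeneous polynomial, since $a(0)=I$ after the normalization. Its gradient is a local homeomorphism near $0$, by a degree/stability argument: the local degree of $Du$ at $x_0$ is $\pm1$ and persists under the rescaling limit when $0$ is an isolated preimage. Lewy's harmonic-gradient theory \cite{Lewy1968}, combined with the zero-Hessian theorem for ternary forms, shows that a homogeneous harmonic polynomial of degree $\ge3$ in $\R^3$ cannot have a homeomorphic gradient near the origin. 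This is the contradiction.

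The consequences follow from the inverse function theorem and smoothness of $u$. The main obstacle is the rank-two case: deriving a genuinely elliptic linear equation for the Schur-complement quantity after the partial Legendre transform with a general $a(Du)$. I would first try the explicit computation in coordinates where $a(Du(x_0))=I$ and check that the principal symbol is congruent to $a$.
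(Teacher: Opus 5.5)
\textbf{Rank zero.} The step that fails is your claim that the blow-up gradient $DP_d$ is a local homeomorphism near $0$. The rescalings $r^{1-d}\bigl(Du(x_0+rx)-Du(x_0)\bigr)$ are injective, but a uniform limit of injective maps need not be injective. Your degree argument gives at most that $DP_d$ has index $\pm1$ at an \emph{isolated} zero, and that is far from injectivity. Worse, $0$ need not be an isolated zero of $DP_d$. In exactly the configuration to which the problem ultimately reduces, $P_d$ is, after a linear change of variables, $\Re\bigl(c(X_1+iX_2)^d\bigr)$. Its gradient vanishes on a whole line, so the degree is not even defined.

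Hence Lewy's harmonic theorem, which presupposes a homeomorphic gradient, cannot be invoked for $P_d$. If it could, $D^2P_d(0)=0$ would finish at once, and the ternary zero-Hessian theorem would have no role; this suggests the ingredients are not assembled correctly.

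The paper passes to the limit only information that survives it:
\begin{enumerate}[label=(\roman*)]
\item By \cref{lem:orientation}, $\det D^2u$ has a fixed weak sign near $x_0$, so $\det D^2P_d$ has a fixed sign on the sphere.
\item For the trace-free Hessian of $Q(X)=P_d(a(q_0)^{1/2}X)$, this means at most one negative eigenvalue (or at most one positive one, after replacing $Q$ by $-Q$).
\item The Gleason--Wolff maximum principle together with $D^2Q(0)=0$ then gives $\det D^2Q\equiv0$.
\item Gordan--Noether reduces $Q$ to a planar form $\Re(c\zeta^d)$.
\item The contradiction is reached against the injectivity of the original map $Du$, not of its blow-up, via the monodromy argument of \cref{lem:monodromy}. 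That argument needs the $C^1$ control of the Taylor remainder.
\end{enumerate}

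\textbf{Rank two.} Your statement that differentiating in $y_3$ produces a linear elliptic equation for $w=\phi_{ss}$ (your $u^*_{33}$) hides the real difficulty, which is not only ellipticity. The transformed equation is $\tr(BM)=cw$, where $B=(D_y^2\phi)^{-1}$, $M=J^T\mathcal A J$, $\mathcal A=a(y,-\phi_s)$, and $J$ is the graph matrix of the transform. The equation depends on $D_y^2\phi$ through the inverse $B$. Therefore its second $s$-derivative contains
\[
\mathcal C=\tr(BP_sBP_sBM),\qquad P_s=D_y^2\phi_s,
\]
which is quadratic in third derivatives of $\phi$ that are not derivatives of $w$. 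Since $P_s(0)$ need not vanish, $\mathcal C$ is a priori only bounded. The resulting bound $|Lw|\le C(|w|+|Dw|)+C$ is useless for the strong minimum principle.

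The missing idea is the two-dimensional cancellation of \cref{lem:cubic}. Set $S=M^{1/2}BM^{1/2}$; its eigenvalues $\lambda,\mu$ have opposite signs because $\tr(BM)=0$ at the origin. Write $N=M^{-1/2}P_sM^{-1/2}$ in an eigenbasis of $S$. The equation and its first $s$-derivative give $\lambda+\mu=O(|w|)$ and $\lambda^2N_{11}+\mu^2N_{22}=O(|w|+|Dw|)$. Hence $N_{11}+N_{22}=O(|w|+|Dw|)$, and
\[
\mathcal C=\lambda^3N_{11}^2+\mu^3N_{22}^2+\lambda\mu(\lambda+\mu)N_{12}^2=O(|w|+|Dw|).
\]
Only then does $v=\sigma w\ge0$ satisfy $Lv\le C(v+|Dv|)$, with $L$ elliptic by the congruence factorization you anticipated, so that the strong minimum principle applies.

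A smaller point: monotonicity of $s\mapsto\phi_s(y,s)$ along each line gives a sign for $w$ only line by line. A common sign $\sigma$ on a whole neighborhood needs a short argument, for instance \cref{lem:orientation} applied to $G(y,s)=(y,-\phi_s)$.
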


There are four points in \cref{thm:local-intro} worth emphasizing.  First, the
coefficient matrix need not be rotationally invariant and need not arise from a
variational integral.  Second, positive definiteness is only local on gradient
space; no global ellipticity constants are required.  Third, smoothness suffices
also in the rank-zero alternative: strong unique continuation excludes
infinite-order contact with the affine tangent function and hence supplies a
first nonzero homogeneous Taylor term.  With merely $a\in C^2$ and $u\in C^4$,
the rank-one and rank-two arguments already show that every singular Hessian
must have rank zero.  Fourth,
the hypothesis that the coefficients depend only on $Du$ is structural.
After a partial Legendre transform $\phi$, the terms arising from the first
and second $s$-derivatives of $a(y,-\phi_s)$ are controlled by
$C(|w|+|Dw|)$, where $w=\phi_{ss}$.

The decisive new observation in the rank-two calculation is that the principal
symbol of the equation for the degenerate direction can be written as
\begin{equation}\label{eq:intro-symbol}
 \cQ(\xi,\tau)
 =\bigl(JB\xi+\tau e_3\bigr)^T
 a(y,-\phi_s)
 \bigl(JB\xi+\tau e_3\bigr).
\end{equation}
Here $B$ is the invertible tangential Hessian block, $\phi$ is the partial
Legendre transform, and $J$ is the natural graph matrix of the transform.  Thus
ellipticity is inherited by a congruence and does not depend on the special
formula of the $p$-Laplace coefficient.  The remaining cubic matrix term is
controlled by a genuinely two-dimensional cancellation.

The theorem has a particularly natural variational formulation.  If
$F\in C^\infty(\mathcal U)$ satisfies
\[
 D_q^2F(q)>0\qquad\text{for every }q\in\mathcal U,
\]
then
\begin{equation}\label{eq:variational}
 \operatorname{div}\bigl(D_qF(Du)\bigr)=0
\end{equation}
has the positive definite coefficient matrix $D_q^2F(Du)$.  We obtain the
following consequence.

\begin{corollary}\label{cor:variational-intro}
Let $\mathcal U\subset\R^3$ be open, let $F\in C^\infty(\mathcal U)$ satisfy
$D_q^2F(q)>0$ on $\mathcal U$, and let $\Omega\subset\R^3$ be a domain.  Suppose
that $u\in C^{2,\alpha}_{\mathrm{loc}}(\Omega)$ solves
\eqref{eq:variational}, that $Du(\Omega)\subset\mathcal U$, and that
$Du:\Omega\to Du(\Omega)$ is a local homeomorphism.  Then
$\det D^2u$ is nowhere zero in $\Omega$, and $Du$ is a local
$C^\infty$-diffeomorphism.
\end{corollary}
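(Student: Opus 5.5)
The plan is to reduce \cref{cor:variational-intro} directly to \cref{thm:local-intro} by rewriting \eqref{eq:variational} in nondivergence form. I first check that the divergence can be expanded pointwise. Since $u\in C^{2,\alpha}_{\mathrm{loc}}(\Omega)$ and $F\in C^\infty(\mathcal U)$ with $Du(\Omega)\subset\mathcal U$, the vector field $D_qF(Du)$ is $C^{1,\alpha}_{\mathrm{loc}}$. The chain rule then gives, at every point of $\Omega$,
\[
 \operatorname{div}\bigl(D_qF(Du)\bigr)=\partial_i\bigl(F_{q_i}(Du)\bigr)=F_{q_iq_j}(Du)\,u_{ij}.
\]
So $u$ is a classical solution of \eqref{eq:main} with $n=3$ and $a(q):=D_q^2F(q)$. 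If the equation is understood only weakly, the same identity holds: $u$ is $C^2$, so integration by parts against test functions recovers the pointwise equation.

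Next I verify the hypotheses of \cref{thm:local-intro}. The field $a=D_q^2F$ belongs to $C^\infty(\mathcal U,\Sym_3)$, because mixed partial derivatives of a smooth function commute, and $a(q)>0$ for every $q\in\mathcal U$ by assumption. The regularity $u\in C^{2,\alpha}_{\mathrm{loc}}(\Omega)$, the inclusion $Du(\Omega)\subset\mathcal U$, and the local homeomorphism property of $Du$ are all given directly. Applying \cref{thm:local-intro} then yields $\det D^2u\neq0$ throughout $\Omega$ and shows that $Du$ is a local $C^\infty$-diffeomorphism.

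There is no genuine obstacle here. The only point needing care is the conversion from divergence form to nondivergence form, which is justified by the $C^2$ regularity of $u$ and the smoothness of $F$. All substantive difficulty is contained in \cref{thm:local-intro}.
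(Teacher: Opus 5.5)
Your proposal is correct and matches the paper's argument. Expanding $\operatorname{div}(D_qF(Du))=F_{q_iq_j}(Du)u_{ij}$ puts the equation in the form \eqref{eq:main} with the smooth, positive definite, symmetric field $a=D_q^2F$, and the corollary then follows directly from \cref{thm:local-intro}.
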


The choices
\[
 F(q)=\frac1p|q|^p,
 \qquad
 F(q)=\sqrt{1+|q|^2}
\]
give, respectively, the $p$-Laplace equation away from $q=0$ and the minimal
surface equation on all of gradient space.  Accordingly, we recover the recent
$p$-harmonic theorem and obtain its minimal-surface counterpart without a
nonvanishing-gradient assumption.  In both cases the coefficient matrix is real
analytic on the relevant gradient range, so analytic elliptic regularity and the
analytic inverse function theorem show that the local diffeomorphism is
real analytic.

More generally, let $\mathcal F:\mathcal U\to\R^3$ be smooth and consider
\[
\operatorname{div}\mathcal F(Du)=0.
\]
Because $D^2u$ is symmetric, this equation is
\[
b^{ij}u_{ij}=0,
\]
where
\[
 b=\frac{D\mathcal F(Du)+D\mathcal F(Du)^T}{2}.
\]
We therefore have the following corollary.

\begin{corollary}\label{cor:flux}
Let $\mathcal U\subset\R^3$ be open and let
$\mathcal F\in C^\infty(\mathcal U,\R^3)$ satisfy
\[
 \frac{D\mathcal F(q)+D\mathcal F(q)^T}{2}>0
 \qquad\text{for every }q\in\mathcal U.
\]
Let $\Omega\subset\R^3$ be a domain, and suppose that
$u\in C^{2,\alpha}_{\mathrm{loc}}(\Omega)$ solves
$\operatorname{div}\mathcal F(Du)=0$, that
$Du(\Omega)\subset\mathcal U$, and that
$Du:\Omega\to Du(\Omega)$ is a local homeomorphism.  Then
$\det D^2u$ is nowhere zero in $\Omega$, and $Du$ is a local
$C^\infty$-diffeomorphism.
\end{corollary}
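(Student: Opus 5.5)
The plan is to reduce \cref{cor:flux} directly to \cref{thm:local-intro}, taking the matrix field
\[
 a(q)=\frac{D\mathcal F(q)+D\mathcal F(q)^T}{2},\qquad q\in\mathcal U .
\]
By hypothesis $a\in C^\infty(\mathcal U,\Sym_3)$ and $a(q)>0$ for every $q\in\mathcal U$. So the only point to settle is that $u$ solves $a^{ij}(Du)u_{ij}=0$ in the classical sense required by \cref{thm:local-intro}.

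Since $u\in C^{2,\alpha}_{\mathrm{loc}}(\Omega)$ and $\mathcal F$ is smooth on a neighborhood of $Du(\Omega)$, the composition $\mathcal F(Du)$ is $C^1$. The chain rule then gives pointwise
\[
 \operatorname{div}\mathcal F(Du)=\partial_i\bigl(\mathcal F^i(Du)\bigr)=\partial_{q_j}\mathcal F^i(Du)\,u_{ij}.
\]
Because $u_{ij}=u_{ji}$, the contraction $M_{ij}u_{ij}$ with any matrix $M$ only sees the symmetric part $(M+M^T)/2$. Hence $\operatorname{div}\mathcal F(Du)=a^{ij}(Du)u_{ij}$, and the equation holds classically in $\Omega$.

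If the equation is only assumed in the weak or distributional sense, I would first note that $\mathcal F(Du)\in C^{1}$. Its distributional divergence then coincides with the classical one, so the weak equation again gives $a^{ij}(Du)u_{ij}=0$ pointwise.

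All hypotheses of \cref{thm:local-intro} now hold: $\Omega$ is a domain, $u\in C^{2,\alpha}_{\mathrm{loc}}$, $Du(\Omega)\subset\mathcal U$, and $Du$ is a local homeomorphism. The theorem gives $\det D^2u\neq0$ everywhere and that $Du$ is a local $C^\infty$-diffeomorphism. There is no real obstacle here; the only care needed is the symmetrization step, which is what allows a nonsymmetric Jacobian $D\mathcal F$ satisfying only the weak positivity condition on its symmetric part.
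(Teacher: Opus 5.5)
Your proposal is correct and matches the paper's argument. The paper likewise notes that, because $D^2u$ is symmetric, $\operatorname{div}\mathcal F(Du)=0$ is the equation $b^{ij}u_{ij}=0$, where $b$ is the symmetric part of $D\mathcal F(Du)$, and then applies \cref{thm:local-intro} directly.
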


Our second main result concerns a geometrically different regime.  Let
\[
 \Omega=\Omega_0\setminus\overline{\Omega_1}\subset\R^n
\]
be a convex ring, and consider a capacitary solution with constant values on the
two boundary components.  Throughout this paper, a smooth convex hypersurface
is called strictly convex if its second fundamental form is positive definite.
We use the sign convention for which spheres have positive principal curvatures
with respect to the outward unit normal.  The classical study of convex level sets in rings has
two complementary strands.  The macroscopic convexity maximum principles of
Gabriel, Lewis and Caffarelli--Spruck establish quasiconcavity for important
capacitary equations; see \cite{Gabriel,Lewis,CaffarelliSpruck} and Kawohl's
monograph \cite{Kawohl}.  The microscopic method begins with
Korevaar's constant-rank theorem for the second fundamental form of the level
sets \cite{Korevaar}.  It was developed further through quasiconcave-envelope,
constant-rank and curvature-estimate methods
\cite{BianGuanMaXu,BianchiniLonginettiSalani,GuanXu,MaOuZhang,WangZhang,ZhangZhang}.
In particular, for the radial two-eigenvalue operator
\[
 A(|Du|)(\Delta u-u_{\nu\nu})+B(|Du|)u_{\nu\nu}=0,
 \qquad \nu=\frac{Du}{|Du|},
\]
Korevaar's condition
\[
 \frac{d^2}{d\mu^2}\sqrt{\frac{A(\mu)}{B(\mu)}}\ge0
\]
propagates strict convexity by a constant-rank argument once convexity and
noncriticality are known and strict convexity holds at one point.  The
$p$-Laplace and minimal surface operators both
belong to this class.  Once strict level-set convexity is available, the Lewy
nondegeneracy becomes both stronger and simpler: no topological hypothesis on
$Du$ is needed, no analyticity is needed, and the result holds in every
dimension.

\begin{theorem}[Convex-ring gradient theorem]\label{thm:ring-intro}
Let $n\ge2$ and let
$\Omega=\Omega_0\setminus\overline{\Omega_1}\subset\R^n$, where
$\Omega_0$ and $\Omega_1$ are bounded smooth strictly convex domains and
$\overline{\Omega_1}\Subset\Omega_0$.  Let $\mathcal U\subset\R^n$ be open,
let $a\in C^0(\mathcal U,\Sym_n)$ satisfy $a(q)>0$ on $\mathcal U$, and let
$u\in C^3(\Omega)\cap C^2(\overline\Omega)$ solve
\begin{equation}\label{eq:ring-intro}
 \begin{cases}
 a^{ij}(Du)u_{ij}=0&\text{in }\Omega,\\
 u=0&\text{on }\partial\Omega_0,\\
 u=1&\text{on }\partial\Omega_1.
 \end{cases}
\end{equation}
Assume that $Du(\overline\Omega)\subset\mathcal U$, that $|Du|>0$ in
$\overline\Omega$, and that every superlevel body
\[
 K_t=\overline{\Omega_1}\cup\{x\in\overline\Omega:u(x)\ge t\},
 \qquad 0\le t\le1,
\]
has smooth strictly convex boundary.  Then
\[
 (-1)^{n-1}\det D^2u>0\qquad\text{in }\Omega.
\]
Moreover, $Du$ is a global $C^2$-diffeomorphism onto a radially parametrized
ring in gradient space.
\end{theorem}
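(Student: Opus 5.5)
The proof has two parts: a pointwise computation giving the inertia $(1,n-1)$, and a global degree argument. We work in the orthonormal frame adapted to the level sets.

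\textbf{Pointwise inertia.} Fix $x\in\Omega$ and rotate coordinates so that $Du(x)=-|Du|e_n$. Since $u$ decreases from $1$ on $\partial\Omega_1$ to $0$ on $\partial\Omega_0$, the vector $\nu=-Du/|Du|$ is the outward normal of the superlevel body $K_{u(x)}$. The tangential block of the Hessian is
\[
 u_{\alpha\beta}=-|Du|\,h_{\alpha\beta},\qquad 1\le\alpha,\beta\le n-1,
\]
where $h$ is the second fundamental form of $\partial K_{u(x)}$. By the strict convexity hypothesis, $h$ is positive definite. Hence $D^2u$ is negative definite on the $(n-1)$-dimensional subspace $e_n^\perp$, so $D^2u$ has at least $n-1$ negative eigenvalues.

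Next, $D^2u$ cannot be negative semidefinite. Otherwise $a^{ij}u_{ij}=\tr(aD^2u)\le0$, with equality only if $D^2u=0$, because $a>0$. But $D^2u\ne0$, since its tangential block is nonzero. So $D^2u$ has a positive eigenvalue. Counting eigenvalues, the inertia is exactly $(1,n-1)$ and $\det D^2u\neq0$, which gives $(-1)^{n-1}\det D^2u>0$.

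This step uses only that $a(Du(x))$ is positive definite, with no regularity of $a$ beyond continuity. The identity for $u_{\alpha\beta}$ is the standard level-set curvature formula and requires $|Du|>0$, which is assumed.

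\textbf{Global injectivity.} By the inverse function theorem and $u\in C^3$, the map $Du$ is a local $C^2$-diffeomorphism on $\Omega$. Parametrize $\Omega$ by $(t,\nu)\in(0,1)\times\Sph^{n-1}$, sending $(t,\nu)$ to the point $x(t,\nu)\in\partial K_t$ whose outward normal is $\nu$. This point is unique, because each $\partial K_t$ is smooth and strictly convex, so its Gauss map is a diffeomorphism onto $\Sph^{n-1}$. Regularity of this parametrization follows from the implicit function theorem applied to $Du/|Du|=-\nu$, using the nondegeneracy of $h$ and the $C^2$ dependence of $u$ up to the boundary.

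In these coordinates,
\[
 Du\bigl(x(t,\nu)\bigr)=-\rho(t,\nu)\,\nu,\qquad \rho=|Du|>0.
\]
So injectivity of $Du$ reduces to showing that, for each fixed $\nu$, the function $t\mapsto\rho(t,\nu)$ is strictly monotone. Indeed, then $Du(x_1)=Du(x_2)$ forces the same $\nu$ and the same $\rho$, hence the same $t$.

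To get monotonicity, note that $\partial_t x$ points in the direction along which the Gauss parameter stays fixed. Differentiating $Du(x(t,\nu))=-\rho\nu$ in $t$ gives
\[
 D^2u\,\partial_tx=-(\partial_t\rho)\,\nu.
\]
If $\partial_t\rho=0$, then $\partial_tx$ lies in the kernel of $D^2u$, which is trivial. Moreover $\partial_tx\ne0$: pairing $Du\cdot\partial_tx=\partial_t\big(u(x(t,\nu))\big)=1$ forces it. Hence $\partial_t\rho\ne0$ everywhere.

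By continuity and connectedness of $(0,1)\times\Sph^{n-1}$ (here $n\ge2$), $\partial_t\rho$ has a fixed sign. Which sign occurs can be read off by pairing with $\partial_t x$ and using the negativity on tangents; the sign itself is not needed for injectivity.

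\textbf{Conclusion.} The image of $Du$ is therefore
\[
 \bigl\{-r\nu:\ \nu\in\Sph^{n-1},\ r \text{ between } \rho(0,\nu) \text{ and } \rho(1,\nu)\bigr\},
\]
which is a radially parametrized ring in gradient space. Since $Du$ is an injective local $C^2$-diffeomorphism, it is a global $C^2$-diffeomorphism onto this ring.

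\textbf{Main obstacle.} The main obstacle is the regularity and well-definedness of the $(t,\nu)$ parametrization up to the endpoints $t=0,1$, which needs the hypotheses on $\overline\Omega$. If this proves delicate, the fallback is to prove injectivity directly. Assume $Du(x_1)=Du(x_2)$. Then $x_1,x_2$ have the same normal $\nu$ on their respective level sets. Consider the curve $t\mapsto x(t,\nu)$ joining them, and apply the strict monotonicity of $\rho$ along it.
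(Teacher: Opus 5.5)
Your proof is correct. Its global half matches the paper's proof: the Gauss-map parametrization $x(t,\nu)$, differentiation of $Du(x(t,\nu))$ in $t$ at fixed normal, and injectivity from strict monotonicity of $|Du|$ along each fixed-normal curve.

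The pointwise step takes a different route.
\begin{itemize}
\item The paper writes $D^2u$ in the adapted frame with tangential block $-C<0$, mixed column $b$ and normal entry $d$. It then proves, through an explicit completed-square identity, that the Schur complement $S=d+b^TC^{-1}b$ is positive, and reads off the inertia $(1,n-1)$ from the Schur congruence.
\item You get at least $n-1$ negative eigenvalues from min--max. You then exclude $D^2u\le0$ because $\tr(aD^2u)=0$ with $a>0$ forces a negative semidefinite $D^2u$ to vanish.
\end{itemize}
Your argument is shorter and shows that no algebraic identity is needed for the inertia.

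What the paper's identity buys is the explicit quantity $S=\bigl(\nu^T(D^2u)^{-1}\nu\bigr)^{-1}$. This turns the $t$-derivative into the formula $s_t=S/s>0$, which fixes the direction of monotonicity: $|Du|$ increases toward $\partial\Omega_1$, so $\rho_0<\rho_1$. Your argument gives only $\partial_t\rho\ne0$. That is enough for injectivity, and Rolle's theorem on each fixed-normal curve already suffices, so the connectedness step is not needed.

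If you want the sign, ``negativity on tangents'' alone does not give it; you also need the positive eigenvalue.
\begin{itemize}
\item Since $D^2u\,\partial_tx$ is parallel to $\nu$ and $\nu\cdot\partial_tx=-1/\rho\ne0$, the splitting $\R^n=\nu^\perp\oplus\R\,\partial_tx$ is $D^2u$-orthogonal.
\item $D^2u$ is negative definite on $\nu^\perp$ but not negative semidefinite on $\R^n$.
\item Hence $\partial_tx^TD^2u\,\partial_tx=\partial_t\rho/\rho$ must be positive.
\end{itemize}
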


The proof rests on an exact completed-square identity.  In an adapted frame the
tangential Hessian is negative definite.  Writing the Hessian and coefficient
matrix in blocks, the equation implies that the Schur complement of the
tangential Hessian is strictly positive.  This fixes the inertia of $D^2u$ as
$(1,n-1)$.  Parametrizing the level sets by their Gauss maps then shows that the
gradient magnitude is strictly increasing with the level value at fixed normal,
which gives global injectivity.

The two theorems are complementary rather than redundant.  The local theorem
requires no level-set convexity but is three-dimensional and smooth.  The ring
theorem works in every dimension with merely continuous coefficient fields,
once the stated geometric hypothesis of strict level-set convexity is
available.  For several classical equations, this
condition is itself a theorem rather than an imposed assumption.  For example,
Lewis' extension of Gabriel's theorem gives it for $p$-capacitary potentials
even when the boundary components are merely convex, while Korevaar's
deformation and constant-rank theory gives it for a broad radial quasilinear
class.  More general sufficient conditions for
quasiconcavity and constant rank are known for nonlinear elliptic operators
\cite{BianGuanMaXu,BianchiniLonginettiSalani,GuanXu}.  Thus the open issue is not
whether any structural theory exists, but how to formulate sharp, directly
verifiable, and affine-invariant conditions for a genuinely anisotropic matrix
field $a(q)$.  The role of our ring theorem is complementary to that issue:
every
such convexity criterion immediately yields Hessian nondegeneracy and a global
gradient diffeomorphism.

Their common conclusion is that, for this gradient-dependent coefficient
class, either local injectivity of the gradient in dimension three or strict
level-set convexity in arbitrary dimension rules out Hessian degeneracy.

The paper is organized as follows.  \Cref{sec:prelim} collects the topological,
elliptic-regularity, unique-continuation, and algebraic ingredients.
\Cref{sec:rankone} proves the local Lewy theorem by excluding Hessian ranks two,
one, and zero.  \Cref{sec:ring} proves the
arbitrary-dimensional convex-ring
theorem and explains its relation to existing level-set constant-rank
criteria.  \Cref{sec:examples} treats the $p$-Laplace and minimal surface
equations.

\section{Preliminaries}\label{sec:prelim}

The equation \eqref{eq:main} is invariant under invertible linear changes in the
domain, provided the coefficient matrix is transformed accordingly.  In
particular, orthogonal changes may be used to place a Hessian kernel along a
coordinate axis.  Assuming symmetry of $a$ entails no loss of generality for
the equation, since only
the symmetric part of a coefficient matrix contracts with $D^2u$.

All arguments below are local.  Thus the pointwise positive definiteness of $a$
supplies uniform ellipticity after the coordinate neighborhood and the
corresponding gradient range have been shrunk.  If $a\in C^\infty$ and
$u\in C^{2,\alpha}_{\mathrm{loc}}$ solves \eqref{eq:main}, standard interior
Schauder bootstrapping gives
\[
 u\in C^\infty(\Omega).
\]
Indeed, $a(Du)\in C^{1,\alpha}$ initially, so the linear Schauder estimates give
$u\in C^{3,\alpha}$; iteration proves the assertion.  When $a$ is real analytic,
analytic elliptic regularity further gives $u\in C^\omega$; see \cite{Morrey}.

We shall use the following standard form of the strong unique continuation
principle for second-order elliptic equations with Lipschitz leading
coefficients and bounded lower-order terms; see \cite{GarofaloLin}.

\begin{theorem}[Strong unique continuation principle]
\label{strong unique-continuation}
Let $\Omega\subset\R^n$, $n\ge2$, be a connected domain.  Suppose that
$A\in W^{1,\infty}_{\mathrm{loc}}(\Omega,\Sym_n)$ is locally uniformly
positive definite and that $\mathbf b,V\in L^\infty_{\mathrm{loc}}(\Omega)$.
If $v\in H^1_{\mathrm{loc}}(\Omega)$ is a weak solution of
\[
 -\operatorname{div}(A(x)Dv)+\mathbf b(x)\cdot Dv+V(x)v=0
 \qquad\text{in }\Omega
\]
and vanishes to infinite order at some $x_0\in\Omega$, in the sense that
\[
 \int_{B_r(x_0)}v^2\,dx=O(r^N)
 \qquad\text{as }r\downarrow0\quad\text{for every }N>0,
\]
then $v\equiv0$ in $\Omega$.
\end{theorem}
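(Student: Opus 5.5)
The plan is to follow the Garofalo--Lin frequency-function method. After translating $x_0$ to the origin, a linear change of variables normalizes $A(0)=I$. I would then work with the Garofalo--Lin conformal data attached to $A$:
\[
 r(x)^2=\langle A(0)^{-1}x,x\rangle,\qquad \mu(x)=\frac{\langle A(x)x,x\rangle}{|x|^2}.
\]
This gives a distorted ball $B_r$ adapted to $A$. On it I would define
\[
 H(r)=\int_{\partial B_r}\mu v^2,\qquad
 D(r)=\int_{B_r}\bigl(\langle A Dv,Dv\rangle+(\mathbf b\cdot Dv)v+Vv^2\bigr),\qquad
 N(r)=\frac{rD(r)}{H(r)}.
\]
A regularization step comes first. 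Since $A$ is Lipschitz and the lower-order terms are bounded, interior regularity gives $v\in H^2_{\mathrm{loc}}$. This justifies the integrations by parts below by approximation.

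The key steps, in order, are as follows.
\begin{enumerate}[label=(\roman*)]
\item Compute $H'(r)$. The identity is $H'=\bigl(\tfrac{n-1}{r}+O(1)\bigr)H+2\int_{\partial B_r}v\,\langle ADv,\nu\rangle$, where the $O(1)$ comes from $\|DA\|_\infty$. The last integral equals $2D(r)$ by the divergence theorem and the equation.
\item Compute $D'(r)$ via a Rellich--Pohozaev identity. The test vector field is $Z=r\,\nabla r$, built from $A$ so that the principal term reproduces $2\int_{\partial B_r}\mu^{-1}\langle ADv,\nu\rangle^2$. All commutator errors are $O(1)\,D(r)$ by the Lipschitz bound on $A$.
\item Absorb the lower-order contributions. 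Bounded $\mathbf b$ and $V$ contribute at most $C\bigl(D+\tfrac{1}{r}H\bigr)\cdot r$ in the appropriate scaling, using a trace/Poincaré inequality on $B_r$.
\item Combine (i)--(iii) with Cauchy--Schwarz on the boundary integrals. This yields almost-monotonicity: $e^{Cr}(N(r)+C)$ is nondecreasing on $(0,r_0)$.
\end{enumerate}

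From the bounded frequency, integrating $H'/H\le \tfrac{n-1+2N}{r}+C$ gives a doubling estimate $H(2r)\le C_0H(r)$. Integrating in $r$ then gives $\int_{B_{2r}}v^2\le C_0\int_{B_r}v^2$. Iterating the doubling yields
\[
 \int_{B_r}v^2\ge c\,r^{M}\quad\text{for some fixed } M \text{ unless } v\equiv 0 \text{ near } 0.
\]
This contradicts vanishing of infinite order, so $v\equiv0$ in a neighborhood of $x_0$. One caveat must be handled before dividing by $H$: if $H(r)=0$ for some small $r$, then $v$ vanishes on $\partial B_r$, and uniqueness for the Dirichlet problem on small balls gives $v\equiv0$ there. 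This uniqueness holds because $V$ and $\mathbf b$ are small relative to the Poincaré constant at small scales.

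Globalization is the standard open--closed argument. Let $E$ be the set of points at which $v$ vanishes to infinite order. The local result shows that $E$ is open. $E$ is closed: if $x_k\to x$ with $x_k\in E$, then $v\equiv0$ on balls around the $x_k$ of radius bounded below, by uniformity of $r_0$ on compact sets. Since $x_0\in E$ and $\Omega$ is connected, $E=\Omega$, so $v\equiv0$.

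The main obstacle is step (ii), the Rellich-type identity with merely Lipschitz $A$. One must choose the distance function and the weight $\mu$ so that the leading boundary term is exactly a perfect square matching the term produced in $H'$. If instead one uses Euclidean balls, the error terms are of size $O(1/r)$ rather than $O(1)$, and monotonicity fails. This is precisely the device introduced by Garofalo and Lin, and I would import it essentially verbatim, together with their handling of the bounded drift term.
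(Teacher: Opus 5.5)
Your proposal is correct and follows essentially the route the paper relies on. The paper does not prove this statement but cites Garofalo--Lin, and your sketch is their frequency-function argument: the weight $\mu$, the Rellich field $Ax/\mu$, almost-monotonicity of the frequency, doubling, and open--closed globalization.

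The one place your outline is looser than needed is step (iii). Differentiating the lower-order part of $D(r)$ produces the boundary term $\int_{\partial B_r}v\,\mathbf b\cdot Dv$, which involves $\int_{\partial B_r}|Dv|^2$ and is not controlled by $D(r)$ and $H(r)$ alone. Using the Rellich identity, it must be absorbed into the Cauchy--Schwarz defect $\tfrac{r}{H}\bigl(\int_{\partial B_r}\mu^{-1}\langle ADv,\nu\rangle^2-D^2/H\bigr)$; once that is done, your monotonicity of $e^{Cr}(N+C)$ does hold.
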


For a smooth function, vanishing of every Taylor coefficient implies the
required infinite-order $L^2$ vanishing.

\begin{lemma}\label{lem:orientation}
Let $V\subset\R^3$ be connected and let $G\in C^1(V,\R^3)$ be a local
homeomorphism.  Then either
\[
 \det DG\ge0\quad\text{in }V,
\]
or
\[
 \det DG\le0\quad\text{in }V.
\]
\end{lemma}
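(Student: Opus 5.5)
We would prove \cref{lem:orientation} through the local degree of a local homeomorphism.  The plan is to show that the local degree is locally constant on the connected set $V$, and that wherever $\det DG\neq0$ its sign equals that constant.

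\textbf{Step 1: the local degree is defined.}  Fix $x\in V$.  Since $G$ is a local homeomorphism, there is a small closed ball $\overline B\subset V$ centred at $x$ such that $G|_{\overline B}$ is injective and $G(B)$ is open.  Then the local degree
\[
 \deg_{\mathrm{loc}}(G,x):=\deg\bigl(G,B,G(x)\bigr)
\]
is defined.  Because $G|_{\overline B}$ is a homeomorphism onto its image, this degree equals $\pm1$.  This is the standard fact that a homeomorphism between open subsets of $\R^3$ has degree $\pm1$ at each point, which follows from invariance of domain and the homological definition of the degree.

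\textbf{Step 2: the local degree is locally constant.}  For $y\in B$ close to $x$, choose a small ball $B'\ni y$ with $B'\subset B$.  Excision gives
\[
 \deg\bigl(G,B,G(y)\bigr)=\deg\bigl(G,B',G(y)\bigr),
\]
because injectivity places no other preimages of $G(y)$ in $\overline B$.  Homotopy invariance in the target point, for points $G(y)$ in the same component of $\R^3\setminus G(\partial B)$, gives
\[
 \deg\bigl(G,B,G(y)\bigr)=\deg\bigl(G,B,G(x)\bigr).
\]
Hence $x\mapsto\deg_{\mathrm{loc}}(G,x)$ is locally constant.  Since $V$ is connected, it is constant, equal to $\varepsilon\in\{\pm1\}$.

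\textbf{Step 3: comparison with the Jacobian.}  At any point $x$ with $\det DG(x)\ne0$, the inverse function theorem and the $C^1$ formula for the degree give
\[
 \deg_{\mathrm{loc}}(G,x)=\operatorname{sign}\det DG(x).
\]
Therefore $\operatorname{sign}\det DG=\varepsilon$ wherever $\det DG$ does not vanish.  If $\varepsilon=1$ we obtain $\det DG\ge0$ on $V$; if $\varepsilon=-1$ we obtain $\det DG\le0$ on $V$.  This is exactly the stated dichotomy.  Nothing specific to dimension three is used.

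\textbf{Main obstacle.}  The only delicate point is Step 1: justifying that a local homeomorphism has local degree $\pm1$ and that the degree is well defined even near critical points of $G$.  I would handle this by citing standard Brouwer degree theory, namely excision, homotopy invariance, and the multiplicativity $\deg(G^{-1}\circ G)=1$.  Alternatively, one can argue directly: since $\det DG$ is continuous, the sets $\{\det DG>0\}$ and $\{\det DG<0\}$ are open.  If both were nonempty, a path in $V$ joining them would let one transport the constant local degree along it, giving a contradiction with Step 3.
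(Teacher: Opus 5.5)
Your proposal is correct and follows essentially the same route as the paper: the local degree of a local homeomorphism is constant on the connected set $V$ and equals $\operatorname{sign}\det DG$ at each regular point. The paper also shows that regular points are dense (area formula plus invariance of domain), which you rightly omit since the weak inequalities hold trivially where $\det DG=0$; for the same reason the fact that the local degree is $\pm1$ (your ``main obstacle'') is not actually needed, because if no regular point exists then $\det DG\equiv0$.
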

\begin{proof}
A local homeomorphism between oriented connected manifolds has a fixed
orientation character.  Equivalently, its local degree is everywhere $+1$ or
everywhere $-1$; see \cite{Hatcher}.  At a regular point this degree equals the
sign of the Jacobian.

It remains to note that regular points are dense.  If $\det DG$ vanished on an
open ball $B\Subset V$, the area formula would give
\[
 |G(B)|\le\int_B|\det DG|\dd x=0;
\]
see \cite{EvansGariepy}.  This contradicts invariance of domain, since a local
homeomorphism is open.  Hence every open ball contains a regular point, and
continuity gives the fixed weak sign.
\end{proof}

We use the following form of the Gleason--Wolff maximum principle
\cite{GleasonWolff}.

\begin{theorem}[Gleason--Wolff]\label{thm:GW}
Let $Q$ be harmonic in a connected domain in $\R^3$.  Suppose that $D^2Q$ has at
most one negative eigenvalue at every point.  If
\[
 \det D^2Q(x_0)=0
\]
at one point, then
\[
 \det D^2Q\equiv0.
\]
\end{theorem}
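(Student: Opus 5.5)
The plan is to reduce the statement to a strong minimum principle for a nonnegative quantity built from the Hessian spectrum. Write $H=D^2Q$ with ordered eigenvalues $\lambda_1\le\lambda_2\le\lambda_3$. Harmonicity gives $\lambda_1+\lambda_2+\lambda_3=0$. If no eigenvalue is negative, this forces $H=0$. If exactly one is negative, then $\lambda_1<0\le\lambda_2\le\lambda_3$. In either case
\[
 g:=-\det D^2Q=-\lambda_1\lambda_2\lambda_3\ \ge 0,
\]
and $g(x_0)=0$ is an interior minimum. Since $Q$ is real analytic, so is $g$. Hence it suffices to show that the zero set $Z=\{g=0\}$ is open; it is closed, and the domain is connected. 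I would prove openness near each $x_0\in Z$ by a Hopf-type argument: exhibit a nonnegative function vanishing exactly on $Z$ near $x_0$ and satisfying a linear differential inequality $\Delta w\le C|Dw|+Cw$ there.

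\emph{Step 1: the generic point of $Z$.} Suppose $\lambda_1(x_0)<0=\lambda_2(x_0)<\lambda_3(x_0)$. Then $\lambda_2$ is a simple eigenvalue and is real analytic near $x_0$, with $w:=\lambda_2\ge0$ and $Z=\{w=0\}$ locally. With an analytic unit eigenvector $e_2$, the standard second-variation formula for a simple eigenvalue, combined with the entrywise harmonicity $\Delta H=0$, gives
\[
 \Delta\lambda_2=2\sum_{k=1}^3\sum_{j\ne2}\frac{(e_j\cdot\partial_kH\,e_2)^2}{\lambda_2-\lambda_j}.
\]
The $j=1$ terms are positive and the $j=3$ terms are negative. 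I would use two facts about the tensor $T_{ijk}=Q_{ijk}$ in the eigenframe. First, it is totally symmetric. Second, it is trace-free in every pair, since $\sum_i T_{iik}=\partial_k\Delta Q=0$. The first-order information $\partial_k\lambda_2=T_{22k}$ ties the coefficients to $Dw$. The goal is to show that the bad part is $O(|Dw|^2/\lambda_3+w)$ near $x_0$. In fact I hope the positive $j=1$ terms can be shown to absorb the $T_{122}$ and $T_{123}$ contributions, using $\lambda_1\approx-\lambda_3$ at $x_0$ and $T_{112}=-T_{222}-T_{233}$. This would yield $\Delta w\le C|Dw|+Cw$, and the Hopf strong minimum principle then forces $w\equiv0$ near $x_0$.

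\emph{Step 2: degenerate points of $Z$.} It remains to treat points where $\lambda_2(x_0)=\lambda_3(x_0)=0$. There $\lambda_1(x_0)=0$ as well, so $H(x_0)=0$. Since $\lambda_2$ need not be smooth at such points, I would work directly with $g=-\det H$. The analogous expansion is
\[
 \Delta\det H=\sum_k D^2{\det}(H)[\partial_kH,\partial_kH],
\]
which in the eigenframe reads $2\sum\lambda_i\bigl(T_{jjk}T_{\ell\ell k}-T_{j\ell k}^2\bigr)$ over cyclic $(i,j,\ell)$. If $H$ vanishes only to first order at $x_0$, I would instead argue by blow-up. The first nonzero homogeneous Taylor term $P$ of $Q-(\text{affine part})$ is a harmonic polynomial whose Hessian again has at most one negative eigenvalue, and $\det D^2P\le0$. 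I would then try to show that such a $P$ must have $\det D^2P\equiv0$. This would place $x_0$ in the closure of the interior of $Z$ or contradict the hypothesis; combined with Step 1 and analyticity of $g$, it closes the openness argument.

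\emph{Main obstacle.} I expect the hard point to be the sign computation in Step 1, namely absorbing the negative $j=3$ terms, since it is exactly where dimension three and the trace-free third-derivative tensor must be used. If the direct absorption fails, my fallback is a weighted quantity $w=\lambda_2\,h(\lambda_1,\lambda_3)$, with $h$ chosen to cancel the offending cross terms. This is in the spirit of Gleason--Wolff's eigenvalue form of the maximum principle.
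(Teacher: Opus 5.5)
Your Step 1 is sound, but the sign bookkeeping is reversed. For a supersolution inequality $\Delta w\le C|Dw|+Cw$ the negative $j=3$ terms cost nothing. The positive $j=1$ terms are the ones that must be controlled, and it is the $j=3$ terms that absorb them. With $T_{112}=-T_{222}-T_{233}$ and $(\lambda_3-\lambda_2)-(\lambda_2-\lambda_1)=-3\lambda_2$, your second-variation formula becomes exactly
\[
 \Delta\lambda_2
 =-\frac{6\lambda_2\,(T_{123}^2+T_{233}^2)}{(\lambda_2-\lambda_1)(\lambda_3-\lambda_2)}
 +\frac{2\bigl(T_{122}^2+T_{222}^2+2T_{222}T_{233}\bigr)}{\lambda_2-\lambda_1}
 -\frac{2T_{223}^2}{\lambda_3-\lambda_2},
 \qquad T_{22k}=\partial_k\lambda_2 .
\]
The first and last terms are nonpositive, so $\Delta\lambda_2\le C|D\lambda_2|$ near any zero of $\det D^2Q$ at which $D^2Q\ne0$. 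Hopf's principle and analyticity then give $\det D^2Q\equiv0$ whenever $Z$ contains such a point.

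The genuine gap is Step 2. What remains is the configuration $\emptyset\ne Z=\{D^2Q=0\}$ with $\det D^2Q<0$ off $Z$. Excluding it is the real content of the Gleason--Wolff theorem, which the paper cites rather than proves. It is also the case the paper actually uses: in the proof of \cref{prop:rankzero} the theorem is applied at the origin to a homogeneous harmonic $Q$ of degree $d\ge3$, where $D^2Q(0)=0$.

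Your blow-up does not reduce this case. The leading term $P$ is again homogeneous harmonic, with at most one negative Hessian eigenvalue and $D^2P(0)=0$. So ``show $\det D^2P\equiv0$'' is the original problem. It is immediate for odd $d$, since then $D^2P(-x)=-D^2P(x)$ has at most one eigenvalue of each sign, but not for even $d$. Even granted, the conclusion does not transfer back to $Q$. The identity $\det D^2P\equiv0$ only says that the leading Taylor term of $\det D^2Q$ at $x_0$ vanishes; it produces neither a zero with $D^2Q\ne0$ nor interior points of $Z$.

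The Hopf argument cannot be pushed into $x_0$ either. The drift coefficient $4|T_{233}|/(\lambda_2-\lambda_1)$ has no reason to stay bounded: the denominator is $O(|D^2Q|)$, while the numerator is only $O(|D^3Q|)$. The strong minimum principle really does fail for singular drifts; for example, $w=|x|^2$ satisfies $\Delta w=b\cdot Dw$ with $b=3x/|x|^2$ in $\R^3$. Working with $g$ instead would need a cancellation you have not found. Indeed $g=O(|D^2Q|^3)$ and $|Dg|=O(|D^2Q|^2)$, while $\Delta g$ contains terms of first order in $D^2Q$.

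As written, the proposal proves the statement only when $Z$ contains a point with $D^2Q\ne0$. A new idea is needed at totally degenerate points.
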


For later use, if $M\in\Sym_3$ and $\tr M=0$, then
\[
 \det M\le0
 \quad\Longrightarrow\quad
 M\text{ has at most one negative eigenvalue}.
\autotag\label{eq:trace-sign}
\]
Indeed, two negative eigenvalues and one positive eigenvalue would give positive
determinant, while three negative eigenvalues are incompatible with zero trace.
If $\det M\ge0$, the same observation applies to $-M$.

We also use the ternary case of the Gordan--Noether theorem.  The classical
source is \cite{GordanNoether}; a modern detailed account, including corrections
of subtleties in general dimension, is \cite{WatanabeBondt}.

\begin{theorem}[Ternary Gordan--Noether theorem]\label{thm:GN}
Let $Q$ be a real homogeneous polynomial in three variables, of degree at least
three.  If
\[
 \det D^2Q\equiv0,
\]
then, after an invertible real linear change of variables,
\[
 Q(X)=Q_0(X_1,X_2).
\]
\end{theorem}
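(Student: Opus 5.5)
Since \cref{thm:GN} is classical and is only quoted here, I would reprove it along the traditional Gordan--Noether route for the polar map, working first over $\mathbb C$ and returning to real coefficients at the end. Set $q=DQ:\mathbb C^3\to\mathbb C^3$; this map is homogeneous of degree $d-1\ge2$. The hypothesis $\det D^2Q\equiv0$ says that the Jacobian of $q$ is everywhere singular. Hence the components $Q_1,Q_2,Q_3$ are algebraically dependent, and the closure of the image of the induced rational map $\mathbb P^2\dashrightarrow\mathbb P^2$ has dimension at most one.

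If the image is a point, then $DQ$ is everywhere proportional to a fixed vector $c$. Euler's identity $dQ=X\cdot DQ$ then forces $Q=\lambda(c\cdot X)^d$, which already has the required form. Otherwise the image is an irreducible plane curve $\{f=0\}$, with $f$ homogeneous. Differentiating $f(DQ)=0$ gives $D^2Q\,\xi=0$ with $\xi(X)=Df(DQ(X))$, and Euler's identity for $f$ gives $\xi\cdot DQ=0$. I would then show that $DQ$ is constant along the lines $X+t\xi(X)$. Indeed, $\xi$ depends on $X$ only through $DQ(X)$, so along a curve in the kernel direction $\xi$ stays fixed, and the integral curves of the kernel field are straight lines.

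The heart of the argument, and the step I expect to be the main obstacle, is to show that $\xi(X)$ spans a \emph{fixed} direction. This means proving that the projective points $[\xi]$, which trace the dual curve of $\{f=0\}$, all coincide. I would try the classical computation first.

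1. Since the fibre of the polar map through $X$ contains the plane spanned by $X$ and $\xi(X)$, the polar map is constant on the projective line joining $[X]$ and $[\xi(X)]$.
2. On that line the value $DQ$ equals a fixed point $y$ of the curve, so $\xi$ takes the value $Df(y)$ along the whole line.
3. A generic point of $\mathbb P^2$ therefore lies on exactly one such line (ternary case). These lines form a one-parameter family, and each line contains its own distinguished point $[\xi]=[Df(y)]$.
4. Comparing two nearby lines that pass through a common point, one shows their distinguished points must agree. Hence $[Df(y)]$ is locally constant in $y$, and therefore constant, since the curve is irreducible.

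This is exactly where the restriction to three variables enters: in higher dimensions the fibres have more room and the argument genuinely breaks. If the direct argument stalls, I would fall back on the detailed treatment in \cite{WatanabeBondt}.

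Once $\xi\equiv c$ up to scale, with $c\ne0$ constant in $\mathbb C^3$, the relation $\xi\cdot DQ=0$ becomes the linear relation $c\cdot DQ\equiv0$. Since $Q$ has real coefficients, the real and imaginary parts of $c$ each satisfy this relation, so some nonzero real vector, again called $c$, does too. Then $Q$ is invariant under translation in the direction $c$. Choosing a real invertible linear change of variables sending $c$ to $e_3$ gives $\partial_3Q\equiv0$, that is, $Q(X)=Q_0(X_1,X_2)$, which is the conclusion of \cref{thm:GN}.
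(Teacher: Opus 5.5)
Your route differs from the paper's. The paper does not reprove the complex theorem at all. It quotes Gordan--Noether, with \cite{WatanabeBondt}, and supplies only the passage from $\mathbb C$ to $\R$, which is exactly your final paragraph. Reproving the complex statement through the polar map is legitimate, and everything up to the identity $DQ(X+t\xi(X))=DQ(X)$ is fine. (You should take $f$ irreducible, so that $Df$ does not vanish identically on the image curve and $\xi\not\equiv0$.)

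\textbf{The gap is step 4.} It carries the whole content of the ternary theorem and is only asserted. ``Comparing two nearby lines \dots\ one shows their distinguished points must agree'' is not an argument, and nothing in steps 1--3 produces it. Step 1 also glosses over the point that matters. Taken literally, the polar map would be defined and constant on the whole line through $[X]$ and $[\xi(X)]$. Then two lines from different fibres could never meet, which is impossible in $\mathbb P^2$.

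\textbf{The missing fact is that $[\xi(X)]$ is a base point of the polar map.} For $s\ne0$ we have $DQ(sX+t\xi(X))=s^{d-1}DQ(X)$, and both sides are polynomial in $(s,t)$. Setting $s=0$, $t=1$ gives $DQ(\xi(X))=0$; this uses $d\ge2$. Consequently:
\begin{enumerate}[label=(\roman*)]
\item $X$ and $\xi(X)$ are linearly independent;
\item $[\xi(X)]$ is the \emph{only} base point on the line $L_X$ through $[X]$ and $[\xi(X)]$;
\item every other point of $L_X$ is sent to $y=[DQ(X)]$.
\end{enumerate}

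\textbf{How this closes step 4.} Take generic $X,X'$ with $[DQ(X)]\ne[DQ(X')]$. The lines $L_X\ne L_{X'}$ meet in a point $R$ of $\mathbb P^2$. $R$ cannot be a regular point of the polar map, since it would be sent to both values. So $R$ is the unique base point of each line, i.e. $[\xi(X)]=R=[\xi(X')]$.

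This gives constancy of $[\xi]$ on a dense set immediately. It is also exactly where dimension three enters: two lines in a plane always meet. Your step 3, the ``nearby'' lines, and the local-constancy and irreducibility argument are then unnecessary.

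With this inserted, your conclusion $c\cdot DQ\equiv0$ and the real-part/imaginary-part reduction complete the proof; that reduction is identical to the paper's remark after \cref{thm:GN}. Without it, the proposal falls back on citing \cite{WatanabeBondt}, which is what the paper does.
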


The real statement follows from the complex theorem.  If a complex vector
$\mathbf{z}=\mathbf{x}+i\mathbf{y}\ne0$ satisfies $\mathbf{z}\cdot DQ=0$, then, because $Q$ has real coefficients,
$\mathbf{x}\cdot DQ=\mathbf{y}\cdot DQ=0$.  At least one of $\mathbf{x}$ and $\mathbf{y}$ is a nonzero real
direction in
which $Q$ is constant.

The rank-zero argument ends with a purely topological obstruction.  The
following lemma explains why a planar complex power cannot be the leading-order
part of an injective three-dimensional mapping.  A proof may be found in
\cite[Lemma~2.4]{LiLiuMa}; for completeness and to keep the present argument
self-contained, we include it here.

\begin{lemma}\label{lem:monodromy}
Let $m\ge2$ and let $G=(G_1,G_2,G_3)$ be continuous in a neighborhood of
$0\in\R^3$.  Write
\[
 \zeta=\xi+i\eta,\qquad t=x_3,
 \qquad P=G_1+iG_2,\qquad Z=G_3.
\]
Assume that $P$ is $C^1$ and, after invertible real linear changes in the domain
and target,
\begin{equation}\label{eq:mon-leading}
 P(\zeta,t)=c\zeta^m+R(\zeta,t),\qquad c\ne0,
\end{equation}
where
\begin{equation}\label{eq:mon-error}
 |R(x)|=O(|x|^{m+1}),
 \qquad |DR(x)|=O(|x|^m).
\end{equation}
Then $G$ is not one-to-one in any neighborhood of the origin.
\end{lemma}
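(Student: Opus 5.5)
We argue by contradiction, using a degree/winding argument on small tori around the $t$-axis. Suppose $G$ is injective on a neighborhood $N$ of $0$; being continuous, $G|_N$ is then a homeomorphism onto an open set by invariance of domain. The idea is to show that a small loop winding once around the $t$-axis near the origin is mapped by $P$ to a loop winding $m$ times around $0$ in the $P$-plane. Injectivity forbids this.

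\emph{Step 1 (leading term dominates).} Fix $\rho>0$ small and $\delta=\rho^{2}$, and consider the solid torus
\[
T=\{(\zeta,t):\ \rho/2\le|\zeta|\le\rho,\ |t|\le\delta\}.
\]
By \eqref{eq:mon-error}, $|R|\le C(\rho+\delta)^{m+1}\le C'\rho^{m+1}$ on $T$, while $|c\zeta^m|\ge |c|(\rho/2)^m$. Hence $P\ne0$ on $T$. Moreover, on every loop $\gamma_{r,t}(\theta)=(re^{i\theta},t)$ with $(re^{i\theta},t)\in T$, the straight-line homotopy from $P$ to $c\zeta^m$ avoids $0$. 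So the winding number of $P\circ\gamma_{r,t}$ about $0$ equals $m$. In fact only the $C^0$ bound on $R$ is needed here; the bound on $DR$ would serve a transversality version if desired.

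\emph{Step 2 (a topological contradiction).} Since $P$ has no zeros on the solid cylinder $\{|\zeta|\le\rho,\ |t|\le\delta\}$ except where $|\zeta|<\rho/2$, consider the zero set $Z_0=\{P=0\}$ inside $D=\{|\zeta|<\rho,\ |t|<\delta\}$. The winding number $m$ on $\partial\{|\zeta|\le\rho\}\times\{t\}$ gives, for each fixed $t$, a local degree $m$ of the planar map $\zeta\mapsto P(\zeta,t)$ on the disc $|\zeta|<\rho$. Now use injectivity. The set $G^{-1}(\{G_1=G_2=0\})\cap D$ is the preimage of the line $\{(0,0,s)\}$ under the homeomorphism $G$, so it is a topological arc (or a union of arcs). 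Each slice $\{t=t_0\}$ meets it, since the degree is nonzero.

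To extract a contradiction, choose a target point $w_0$ with $0<|w_0|\ll\rho^m$ and compare the fibres of $P$ over $w_0$ and over $0$. For each $t$, the planar map $P(\cdot,t)$ has degree $m$ on the disc, so it takes the value $w_0$. Using the $C^1$ control, $DP=mc\zeta^{m-1}\,d\zeta+O(|x|^m)$, so on the annulus $|\zeta|\sim|w_0/c|^{1/m}$ the map $P(\cdot,t)$ is a local diffeomorphism close to $c\zeta^m$. It follows that $P(\cdot,t)^{-1}(w_0)$ consists of exactly $m$ points there, depending continuously on $t$. Hence $G^{-1}(\{(\operatorname{Re}w_0,\operatorname{Im}w_0)\}\times\R)$ contains $m\ge2$ disjoint arcs crossing the slab $|t|<\delta$.

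By injectivity, $G_3$ restricted to this preimage must be injective, since the target is a single line. But $G_3$ is continuous, and each arc $\sigma_k$ joins $t=-\delta$ to $t=\delta$. Since $m$ disjoint arcs each map injectively into the line $\{w_0\}\times\R$ with pairwise disjoint images, these images are disjoint intervals. This is not yet a contradiction by itself. To finish, let $w_0$ rotate around the circle $|w_0|=\varepsilon$. The $m$ arcs are permuted cyclically by monodromy, because the roots of $c\zeta^m=w_0$ are cyclically permuted. The disjoint intervals $I_k(w_0)\subset\R$ vary continuously in $w_0$ and must therefore keep their linear order, which contradicts a nontrivial cyclic permutation when $m\ge2$.

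\emph{Main obstacle.} The hardest step is making the monodromy argument rigorous: the continuity of the arcs and intervals in $w_0$, and the claim that the intervals are nondegenerate and ordered. I would handle this by working with the compact sets $G(\sigma_k(w_0))$ and using that $G$ is a homeomorphism to get continuity in the Hausdorff sense. Order preservation then follows since disjoint compact intervals moving continuously cannot exchange positions.
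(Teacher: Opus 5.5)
Your decisive step is the paper's argument. You lift a circle $|w_0|=\text{const}$ in the $P$-plane to $m$ disjoint sheets parametrized by $t$, use injectivity of $G$ to make the $Z$-images of the sheets pairwise disjoint compact intervals, and contradict the cyclic monodromy of the sheets with the constancy of the intervals' linear order. Step~1 and the discussion of the zero set of $P$ play no role and can be dropped.

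One parameter choice has to be repaired. With $\delta=\rho^2$ fixed first, the condition $0<|w_0|\ll\rho^m$ does not make $P(\cdot,t)$ close to $c\zeta^m$ on the annulus $|\zeta|\sim|w_0/c|^{1/m}$ for all $|t|<\delta$. Once $|w_0/c|^{1/m}\ll\rho^2$, the remainder $O(|x|^{m+1})$ can dominate there. For example, for $P=\zeta^m+t^{m+1}$ and real $w_0>0$, the $m$ fibre points coalesce at $\zeta=0$ when $t^{m+1}=w_0$. You need $|t|\lesssim|w_0/c|^{1/m}$, which is exactly the paper's blow-up scaling $\zeta=r\rho e^{i\theta}$, $t=r\tau$, $|\tau|\le\varepsilon$, $|w_0|=r^m$.

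With that scaling the rest of your argument goes through:
\begin{itemize}
\item Your annulus count (winding $m$ on the outer circle, $0$ on the inner one, positive Jacobian) gives exactly $m$ fibre points.
\item The bound $|c\zeta^m-w_0|\ll|w_0|$ places each fibre point within a small angle of a root of $c\zeta^m=w_0$, which yields the cyclic relabelling. The paper gets both this and the count at once from its per-sector degree argument.
\item Your ``main obstacle'' disappears. $Z$ is monotone along each sheet, so the endpoints of $I_k$ are the values of $Z$ at the sheet endpoints $t=\pm r\varepsilon$, and these depend continuously on $w_0$. No Hausdorff-continuity argument is needed.
\end{itemize}
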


\begin{proof}
After an invertible real linear change in the first two target coordinates, we
may assume that $c=1$.  Suppose, toward a contradiction, that $G$ is one-to-one
in a ball centered at the origin.  Fix
\[
 0<\eps<\frac{\pi}{4m}.
\]
For sufficiently small $r>0$, write
\[
 \zeta=r\rho e^{i\theta},\qquad t=r\tau,
\]
where $|\rho-1|\le\eps$ and $|\tau|\le\eps$.  By
\eqref{eq:mon-leading} and \eqref{eq:mon-error},
\[
 r^{-m}P(r\rho e^{i\theta},r\tau)
 =\rho^me^{im\theta}+E_r(\rho,\theta,\tau),
\]
where $E_r\to0$ in $C^1$, uniformly on this compact parameter set as
$r\downarrow0$.

For $j=0,\ldots,m-1$, set
\[
 \theta=\frac{\varphi+2\pi j}{m}+\alpha.
\]
The equation
\begin{equation}\label{eq:mon-target}
 P(r\rho e^{i\theta},r\tau)=r^me^{i\varphi}
\end{equation}
reduces, after multiplication by $e^{-i\varphi}$, to a $C^1$-small
perturbation of
\[
 g(\rho,\alpha):=\rho^me^{im\alpha}-1=0.
\]
On the rectangle
\[
 K=\{(\rho,\alpha):|\rho-1|\le\eps,
                     \ |\alpha|\le\eps\},
\]
the equation $g=0$ has the unique solution $(1,0)$, and
$D_{(\rho,\alpha)}g(1,0)$ is an isomorphism from $\R^2$ to
$\mathbb C\simeq\R^2$.  Choose $\delta>0$ such that
$\overline{B_\delta(1,0)}\subset\operatorname{int}K$ and the real Jacobian of
$g$ is positive on $\overline{B_\delta(1,0)}$.  Since $g$ has no zero on
$K\setminus B_\delta(1,0)$, it is bounded away from zero there.  Uniform
$C^1$ convergence now implies, for every
$(\varphi,\tau,j)\in[0,2\pi]\times[-\eps,\eps]\times\{0,\ldots,m-1\}$ and all
sufficiently small $r$, that the perturbed equation has no zero in
$K\setminus B_\delta(1,0)$ and has positive Jacobian in
$\overline{B_\delta(1,0)}$.  For sufficiently small $r$, the straight-line
homotopy between $g$ and the perturbed map has no zero on
$\partial B_\delta(1,0)$.  Homotopy invariance of the Brouwer degree therefore
gives degree one on this ball.  Each zero contributes local degree $+1$,
so the perturbed equation has exactly one zero in $K$.

Denote this zero by
\[
 (\rho_j(\varphi,\tau),\alpha_j(\varphi,\tau)).
\]
The implicit function theorem with parameters shows that these functions are
$C^1$ in $(\varphi,\tau)\in[0,2\pi]\times[-\eps,\eps]$.  Define
\[
 \theta_j(\varphi,\tau)
 =\frac{\varphi+2\pi j}{m}+\alpha_j(\varphi,\tau)
\]
and
\[
 x_j^\varphi(\tau)=
 \left(
 r\rho_j(\varphi,\tau)\cos\theta_j(\varphi,\tau),
 r\rho_j(\varphi,\tau)\sin\theta_j(\varphi,\tau),
 r\tau
 \right).
\]
Then
\[
 P(x_j^\varphi(\tau))=r^me^{i\varphi}
\]
for every $j$ and $\tau$.  Uniqueness on the lifted sheets gives the cyclic
monodromy
\begin{equation}\label{eq:mon-cycle}
 x_j^{2\pi}(\tau)=x_{j+1}^{0}(\tau),
\end{equation}
where indices are taken modulo $m$.

For fixed $\varphi$, set
\[
 I_j(\varphi)
 =Z\bigl(x_j^\varphi([-\eps,\eps])\bigr).
\]
Each $I_j(\varphi)$ is a compact interval.  Injectivity of $G$ implies that
$\tau\mapsto Z(x_j^\varphi(\tau))$ is one-to-one.  Indeed, equality of two
$Z$-values, together with the common $P$-value, would give equality of the
corresponding $G$-values; equality of the points would then force equality of
their third coordinates and hence of the two parameters.

The intervals $I_j(\varphi)$ are pairwise disjoint.  Otherwise, two points on
distinct lifted sheets would have the same $P$- and $Z$-values.  Injectivity of
$G$ would make the points equal, which is impossible because their angular
separation is at least $2\pi/m-2\eps>0$ modulo $2\pi$.

Because $Z$ and the parametrized sheets are continuous, the two endpoints of
each interval $I_j(\varphi)$ depend continuously on $\varphi$.  Hence the
left-to-right order of the labelled pairwise disjoint intervals
\[
 I_0(\varphi),\ldots,I_{m-1}(\varphi)
\]
is locally constant and therefore constant for $0\le\varphi\le2\pi$.  On the
other hand, \eqref{eq:mon-cycle} gives
\[
 I_j(2\pi)=I_{j+1}(0).
\]
A nontrivial cyclic permutation of $m\ge2$ objects cannot preserve a strict
linear order.  This contradiction proves the lemma.
\end{proof}

\section{The local Lewy theorem in dimension three}
\label{sec:rankone}
We first exclude the rank-two case.
\begin{proposition}\label{prop:ranktwo}
	Let $\mathcal U\subset\R^3$ be open, let
	$a\in C^2(\mathcal U,\Sym_3)$ satisfy $a(q)>0$ on $\mathcal U$, and let
	$u\in C^4(\Omega)$ solve $a^{ij}(Du)u_{ij}=0$, with
	$Du(\Omega)\subset\mathcal U$.  If $Du$ is a local homeomorphism, then no
	point $x_0\in\Omega$ can satisfy
	\[
	\rank D^2u(x_0)=2.
	\]
\end{proposition}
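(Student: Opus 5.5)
We argue by contradiction, following the partial Legendre transform strategy indicated in the introduction. Suppose $\rank D^2u(x_0)=2$. After an orthogonal change of coordinates (which preserves the form of the equation, with $a$ replaced by a congruent positive definite field), we may assume $x_0=0$, $Du(0)=0$ after subtracting an affine function (this shifts the argument of $a$, harmless), and that the $2\times2$ block $B=(u_{\alpha\beta}(0))_{\alpha,\beta\le2}$ is invertible while $u_{33}(0)=0$ after taking the Schur complement direction as $e_3$; i.e.\ the kernel of $D^2u(0)$ is $\R e_3$. Define the partial Legendre transform in the first two variables: $y=(y_1,y_2,s)=(u_1(x),u_2(x),x_3)$, which is a local $C^3$ diffeomorphism since $B$ is invertible, and $\phi(y)=x_1u_1+x_2u_2-u$. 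Then $\phi_{y_\alpha}=x_\alpha$, $\phi_s=-u_3$, and the standard formulas give $D^2_{y'}\phi=(D^2_{x'}u)^{-1}$, $\phi_{ss}=-(u_{33}-u_{3\alpha}u^{\alpha\beta}u_{\beta3})$, so $w:=\phi_{ss}$ is, up to sign, the Schur complement and $\det D^2u=-w\det D^2_{x'}u$. Thus the singular set of $D^2u$ near $0$ is exactly $\{w=0\}$, and $w(0)=0$.

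Next we rewrite the equation in $y$-coordinates. The Hessian of $u$ is expressed through $\phi$ as $D^2u=J^{-T}(\cdots)$ and the equation $a^{ij}(Du)u_{ij}=0$ becomes a fully nonlinear equation $\mathcal E(y,D^2\phi,\phi_s)=0$ with $Du=(y_1,y_2,-\phi_s)$, so the coefficients are $a(y,-\phi_s)$, depending on $y$ and $\phi_s$ only. Differentiate this equation twice in $s$. The key computation is that the linearized operator acting on $w=\phi_{ss}$ has principal symbol exactly \eqref{eq:intro-symbol}, $\cQ(\xi,\tau)=(JB\xi+\tau e_3)^Ta(y,-\phi_s)(JB\xi+\tau e_3)$, which is positive definite because the linear map $(\xi,\tau)\mapsto JB\xi+\tau e_3$ is invertible; so $w$ satisfies a linear uniformly elliptic inequality. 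Differentiation of $a(y,-\phi_s)$ in $s$ produces terms with $\phi_{ss}=w$ and $\phi_{sss}=w_s$, bounded by $C(|w|+|Dw|)$; the remaining term quadratic in $D\phi_s$-type second derivatives (the ``cubic matrix term'') must be shown to be $O(|w|+|Dw|)$ as well, using the two-dimensional cancellation: the only non-controlled pieces involve $\det$ of a $2\times2$ matrix built from $D_{y'}\phi_s$, which combines via the equation itself into multiples of $w$. The outcome is
\[
 |L w|\le C(|w|+|Dw|)
\]
for a linear elliptic operator $L$ with $C^1$ leading coefficients near $0$, but in fact we only need sign information.

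Now use topology. By \cref{lem:orientation} applied to $G=Du$, $\det D^2u$ has a fixed weak sign, hence so does $w$ near $0$ (as $\det D^2_{x'}u\ne0$ there); say $w\ge0$. Since $w(0)=0$ is an interior minimum of a nonnegative function satisfying $Lw\le C(|w|+|Dw|)$, the strong maximum principle (Hopf form, valid for differential inequalities with bounded first- and zeroth-order terms) gives $w\equiv0$ in a neighborhood of $0$; the case $w\le0$ is symmetric. Then $\det D^2u\equiv0$ on an open set, and the area formula argument in the proof of \cref{lem:orientation} shows $Du$ maps that set to a null set, contradicting openness of the local homeomorphism $Du$. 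The main obstacle is the second step: organizing the twice-$s$-differentiated equation so that the principal part is the congruence $\cQ$ and verifying that the nonlinear remainder genuinely cancels to $O(|w|+|Dw|)$; I would first carry out the computation for $a$ frozen at a constant matrix to isolate the cancellation, then add the $a(y,-\phi_s)$ derivative terms, which are automatically controlled.
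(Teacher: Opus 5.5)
Your proposal is essentially the paper's proof: partial Legendre transform, $w=\phi_{ss}$ as minus the Schur complement with a fixed sign from \cref{lem:orientation}, the twice $s$-differentiated equation with principal symbol $\cQ$, two-dimensional control of the cubic term, and the strong maximum principle. Your ending via the area formula, instead of the non-injectivity of $G(y,s)=(y,-\phi_s)$, is an equivalent variant. The cubic-term step, which you leave as a sketch, closes along the lines you indicate if the relevant $2\times2$ matrix is taken to be $BP_s$ with $P_s=D_y^2\phi_s$ (rather than $D_{y'}\phi_s$, which is a vector): Cayley--Hamilton gives $\tr(BP_sBP_sBM)=\tr(BP_s)\,\tr(BP_sBM)-\det(BP_s)\,\tr(BM)$, where $M=J^Ta(y,-\phi_s)J$, $\tr(BM)=cw$ by the equation, and $\tr(BP_sBM)=O(|w|+|Dw|)$ by its first $s$-derivative. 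This is a little quicker than the eigenvalue computation in \cref{lem:cubic}.
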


\begin{proof}
Suppose that
\begin{align*}
 x_0=0,
 \qquad \rank D^2u(0)=2.
\end{align*}
After an orthogonal change of variables, take
\[
 \ker D^2u(0)=\operatorname{span}\{e_3\}.
\]
Write $x=(x',s)$, with $x'=(x_1,x_2)$ and $s=x_3$.  The block
\[
 B(0)=D^2_{x'x'}u(0)
\]
is invertible.  Hence
\[
 (x',s)\longmapsto(y,s):=(D_{x'}u(x',s),s)
\]
is a local diffeomorphism.  Define
\[
 \phi(y,s)=x'(y,s)\cdot y-u(x'(y,s),s).
\]
Then
\[
 D_y\phi=x',
 \qquad \phi_s=-u_s,
 \qquad Du=(y,-\phi_s).
\]
Set
\[
 P=D_y^2\phi,
 \qquad B=D^2_{x'x'}u=P^{-1},
 \qquad X=x'_s=D_y\phi_s,
 \qquad w=\phi_{ss}.
\]
Differentiating both sides of $y=D_{x'}u(x',s)$ with respect to
$s$, we obtain
\[
0=D^2_{x'x'}ux_s^{'}+D_{x'}u_s,
\]
that is,
\[
 D_{x'}u_s=-BX.
\]
Consequently
\[
 D^2u=
 \begin{pmatrix}
 B&-BX\\
 -X^TB&X^TBX-w
 \end{pmatrix}.
\autotag\label{eq:hessian-block}
\]
Set
\[
 J=\begin{pmatrix}I_2\\-X^T\end{pmatrix}.
\autotag\label{eq:J-def}
\]
Then
\[
 D^2u=JBJ^T-w e_3e_3^T.
\autotag\label{eq:hessian-factor}
\]
Let
\[
 \cA=a(y,-\phi_s),
 \qquad M=J^T\cA J,
 \qquad c=e_3^T\cA e_3.
\autotag\label{eq:Mc-def}
\]
The matrix $M$ is positive definite, and $c>0$.  From
\eqref{eq:hessian-factor} and \eqref{eq:Mc-def}, equation \eqref{eq:main}
becomes
\begin{equation}\label{eq:transformed}
\tr(BM)=cw.
\end{equation}

Define
\[
 G(y,s)=(y,-\phi_s(y,s)).
\]
If $H(x)=(D_{x'}u(x),x_3)$, then $H$ is a local diffeomorphism and $Du=G\circ H$.  Thus $G=Du\circ H^{-1}$ is a local
homeomorphism.  Moreover,
\[
 DG=
 \begin{pmatrix}
 I_2&0\\
 -(D_y\phi_s)^T&-\phi_{ss}
 \end{pmatrix},
 \qquad \det DG=-w.
\]
By \cref{lem:orientation}, after shrinking the coordinate neighborhood there is
$\sigma\in\{-1,1\}$ such that
\[
 \sigma w\ge0.
\autotag\label{eq:w-sign}
\]
From $D^2u(0)e_3=0$ and \eqref{eq:hessian-block}, the invertibility of $B(0)$
gives
\[
 X(0)=0,
 \qquad w(0)=0.
\]
Thus $\sigma w$ has an interior minimum at the origin and $Dw(0)=0$.  We next
prove a differential inequality for $w$.  Because $BP=I$,
\[
 B_s=-BP_sB,
 \qquad
 B_{ss}=2BP_sBP_sB-BD_y^2wB.
\autotag\label{eq:B-derivatives}
\]
The variables in $M$ are
\[
 M=M(y,z,X),
 \qquad z=\phi_s,
 \qquad X=D_y\phi_s.
\]
At fixed $y$,
\[
 z_s=w,
 \qquad X_s=D_yw,
\]
and therefore
\[
 M_s=M_zw+M_{X_\alpha}w_\alpha.
\autotag\label{eq:Ms}
\]
Similarly, $c=c(y,z)$ and $c_s=c_zw$. Differentiating \eqref{eq:transformed} once gives
\[
 -\tr(BP_sBM)+\tr(BM_s)-c_sw-cw_s=0.
\]
After shrinking the neighborhood, all coefficients and the derivatives of
$\phi$ which occur here are bounded.  Hence
\begin{align}
 \tr(BM)&=O(|w|),\label{eq:traceS}\autotag\\
 \tr(BP_sBM)&=O(|w|+|Dw|).\label{eq:traceSNS}\autotag
\end{align}
Differentiating a second time and using \eqref{eq:B-derivatives},
\begin{align}
0={}&2\tr(BP_sBP_sBM)-\tr(BD_y^2wBM)
 -2\tr(BP_sBM_s)\notag\\
&+\tr(BM_{ss})-c_{ss}w-2c_sw_s-cw_{ss}.
\label{eq:second-diff}\autotag
\end{align}
The only apparently uncontrolled term in \eqref{eq:second-diff} is
\[
 \mathcal C=\tr(BP_sBP_sBM).
\]

\begin{lemma}\label{lem:cubic}
In a sufficiently small neighborhood of the origin,
\[
 |\mathcal C|\le C(|w|+|Dw|).
\]
\end{lemma}

\begin{proof}
Set
\[
 S=M^{1/2}BM^{1/2},
 \qquad N=M^{-1/2}P_sM^{-1/2}.
\]
Then
\[
 \tr(BM)=\tr S,
 \qquad \tr(BP_sBM)=\tr(SNS),
 \qquad \mathcal C=\tr(SNSNS).
\]
At the origin, $w=0$ and \eqref{eq:transformed} gives $\tr(BM)=0$.  Since $M>0$
and $B$ is invertible, $B(0)$ has one positive and one negative eigenvalue.
Thus the eigenvalues $\lambda,\mu$ of $S$ remain of opposite signs and bounded
away from zero in a small neighborhood. Diagonalize $S$ and write
\[
 S=\begin{pmatrix}\lambda&0\\0&\mu\end{pmatrix},
 \qquad
 N=\begin{pmatrix}\alpha&\beta\\\beta&\gamma\end{pmatrix}.
\]
Equations \eqref{eq:traceS}--\eqref{eq:traceSNS} become
\[
 \lambda+\mu=O(|w|),
 \qquad
 \lambda^2\alpha+\mu^2\gamma=O(|w|+|Dw|).
\]
Because $\mu=-\lambda+O(|w|)$ and $|\lambda|$ is bounded away from zero,
\[
 \alpha+\gamma=O(|w|+|Dw|).
\autotag\label{eq:alpha-gamma}
\]
Finally,
\[
 \tr(SNSNS)
 =\lambda^3\alpha^2+\mu^3\gamma^2
  +\lambda\mu(\lambda+\mu)\beta^2.
\]
The last term is $O(|w|)$.  For the first two terms,
\[
 \lambda^3\alpha^2+\mu^3\gamma^2
 =\lambda^3(\alpha^2-\gamma^2)+O(|w|)
 =\lambda^3(\alpha-\gamma)(\alpha+\gamma)+O(|w|).
\]
Since $N$ is bounded, \eqref{eq:alpha-gamma} completes the proof.
\end{proof}

 After shrinking once more, assume $|w|+|Dw|\le1$.  From
\eqref{eq:Ms},
\[
 M_{ss}=M_{X_\alpha}w_{\alpha s}
 +O(|w|+|Dw|).
\]
The terms $\tr(BP_sBM_s)$, $c_{ss}w$, and $c_sw_s$ are also
$O(|w|+|Dw|)$.  Define
\[
 r=J^T\cA e_3\in\R^2.
\autotag\label{eq:r-def}
\]
Because $\partial_{X_\alpha}J=-e_3e_\alpha^T$,
\[
 M_{X_\alpha}=-e_\alpha r^T-r e_\alpha^T,
 \qquad
 \tr(BM_{X_\alpha})=-2(Br)_\alpha.
\]
Using \cref{lem:cubic} in \eqref{eq:second-diff}, we obtain
\begin{equation}\label{eq:Lw-ineq}
 |Lw|\le C(|w|+|Dw|),
\end{equation}
where
\begin{equation}\label{eq:L-def}
 Lw=\tr(BD_y^2wBM)+2Br\cdot D_yw_s+cw_{ss}.
\end{equation}

\begin{lemma}\label{lem:L-elliptic}
The operator $L$ in \eqref{eq:L-def} is uniformly elliptic in a sufficiently
small neighborhood of the origin.
\end{lemma}

\begin{proof}
Its principal symbol at $(\xi,\tau)\in\R^2\times\R$ is
\[
 \cQ(\xi,\tau)
 =\xi^TBMB\xi+2\tau Br\cdot\xi+c\tau^2.
\]
Using \eqref{eq:Mc-def}, \eqref{eq:J-def} and \eqref{eq:r-def}, we obtain the
exact factorization
\[
 \cQ(\xi,\tau)
 =\bigl(JB\xi+\tau e_3\bigr)^T\cA
  \bigl(JB\xi+\tau e_3\bigr).
\autotag\label{eq:symbol-factorization}
\]
The linear map $(\xi,\tau)\mapsto JB\xi+\tau e_3$ has matrix
\[
 \begin{pmatrix}
 B&0\\
 -X^TB&1
 \end{pmatrix},
\]
whose determinant is $\det B\ne0$.  Its inverse is locally uniformly bounded.
Positive definiteness of $\cA$ now gives
\[
 \lambda_0(|\xi|^2+\tau^2)
 \le\cQ(\xi,\tau)
 \le\Lambda_0(|\xi|^2+\tau^2)
\]
for some $0<\lambda_0\le\Lambda_0$.
\end{proof}

Let $v=\sigma w$, where $\sigma$ is chosen in \eqref{eq:w-sign}.  Then
\[
 v\ge0,
 \qquad v(0)=0,
 \qquad Lv\le C(v+|Dv|).
\]
The resulting operator is uniformly elliptic, has bounded drift, and has
nonpositive zeroth-order coefficient.  The strong minimum principle
\cite[Theorem~3.5]{GilbargTrudinger} implies $v\equiv0$ in a neighborhood of
the origin.

Thus $w=\phi_{ss}\equiv0$.  For fixed $y$, $\phi_s(y,s)$ is independent of $s$,
so $G(y,s)=(y,-\phi_s(y,s))$ takes the same value at distinct nearby $s$.
This contradicts the local injectivity of $G$.
\end{proof}

Next, we rule out the rank-one case.
\begin{proposition}\label{prop:rankone}
	Let $\mathcal U\subset\R^3$ be open, let
	$a\in C^0(\mathcal U,\Sym_3)$ satisfy $a(q)>0$ on $\mathcal U$, and let
	$u\in C^2(\Omega)$ solve $a^{ij}(Du)u_{ij}=0$, with
	$Du(\Omega)\subset\mathcal U$.  Then no point $x_0\in\Omega$ can satisfy
	\[
	\rank D^2u(x_0)=1.
	\]
\end{proposition}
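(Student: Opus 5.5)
This is a pointwise algebraic argument at the single point $x_0$; no topological hypothesis on $Du$ is needed, and only continuity of $a$ is used. The idea is to combine two facts. The equation says $\tr\bigl(a(Du(x_0))D^2u(x_0)\bigr)=0$. A positive definite matrix paired with a nonzero positive or negative semidefinite matrix always has trace of a definite sign.

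Suppose $\rank D^2u(x_0)=1$. Since $D^2u(x_0)$ is symmetric of rank one, write
\[
 D^2u(x_0)=\lambda\, e e^T,\qquad \lambda\ne0,\quad |e|=1 .
\]
Set $q_0=Du(x_0)\in\mathcal U$ and $A=a(q_0)$. Then $A>0$ by hypothesis. Evaluating the equation at $x_0$ gives
\[
 0=a^{ij}(q_0)u_{ij}(x_0)=\tr\bigl(A\,D^2u(x_0)\bigr)=\lambda\, e^TAe .
\]
Positive definiteness gives $e^TAe>0$. Hence $\lambda=0$, which is a contradiction.

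For the general mechanism, the same argument shows more: $\tr(AH)$ cannot vanish when $A>0$ and $H\ne0$ is semidefinite, because $\tr(AH)=\tr(A^{1/2}HA^{1/2})$ and $A^{1/2}HA^{1/2}$ is a nonzero semidefinite matrix. A rank-one symmetric matrix is always semidefinite. So the proposition reduces to the elementary remark that a singular Hessian of rank one has a sign, which the equation forbids.

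There is no genuine obstacle. The only points to check are that the equation holds pointwise, which it does since $u\in C^2$, and that $Du(x_0)\in\mathcal U$ so that $a(Du(x_0))$ is defined and positive definite.
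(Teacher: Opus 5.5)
Your proof is correct and is essentially the paper's argument: write $D^2u(x_0)=\lambda\,\xi\otimes\xi$ with $\lambda\ne0$, evaluate the equation at $x_0$ to get $\lambda\,\xi^Ta(Du(x_0))\xi=0$, and conclude $\lambda=0$ from positive definiteness. Your extra remark that $\tr(AH)\ne0$ for any nonzero semidefinite $H$ is a valid generalization, but the paper does not need it.
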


\begin{proof}
	If the rank is one, then
	\[
	D^2u(x_0)=\lambda\,\xi\otimes\xi
	\]
	for some $\lambda\ne0$ and $\xi\ne0$.  Evaluating the equation at $x_0$ gives
	\[
	0=\lambda\,\xi^Ta(Du(x_0))\xi.
	\]
	The last quadratic form is strictly positive, which forces $\lambda=0$, a
	contradiction.
\end{proof}

Finally, we exclude the rank-zero case.
\begin{proposition}\label{prop:rankzero}
	Let $\mathcal U\subset\R^3$ be open, let
	$a\in C^\infty(\mathcal U,\Sym_3)$ satisfy $a(q)>0$ on $\mathcal U$, and let
	$u\in C^{2,\alpha}_{\mathrm{loc}}(\Omega)$ solve
	$a^{ij}(Du)u_{ij}=0$, with $Du(\Omega)\subset\mathcal U$.  If $Du$ is a
	local homeomorphism, then no point $x_0\in\Omega$ can satisfy
	\[
	\rank D^2u(x_0)=0.
	\]
\end{proposition}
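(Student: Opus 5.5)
Assume $x_0=0$ and $D^2u(0)=0$; the goal is a contradiction. By Schauder bootstrapping $u\in C^\infty$. After an affine change of target we may take $Du(0)=0\in\mathcal U$. By a linear change of domain, which transforms $a$ by congruence, we may also take $a(0)=I$. Write
\[
 v=u-u(0)-Du(0)\cdot x .
\]
Then $v$ satisfies $a^{ij}(Du)v_{ij}=0$, i.e.\ a linear elliptic equation with smooth coefficients $A(x)=a(Du(x))$. Rewritten in divergence form, it has Lipschitz leading coefficients and a bounded drift. If every Taylor coefficient of $v$ at $0$ vanished, Theorem~\ref{strong unique-continuation} would give $v\equiv0$. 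Then $Du$ would be constant, which contradicts local injectivity. So there is a first nonzero homogeneous Taylor term $Q$ of degree $m+1\ge3$, with $v=Q+O(|x|^{m+2})$.

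Next, insert this expansion into the equation. Since $A(x)=I+O(|x|)$ (indeed $A(x)-I=O(|x|^{m})$ because $Du=O(|x|^m)$), the lowest-order part of $a^{ij}v_{ij}$ is $\Delta Q$, which is homogeneous of degree $m-1$. Hence $\Delta Q=0$, so $Q$ is a harmonic homogeneous polynomial.

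Now use the topology. We have
\[
 Du=DQ+R,\qquad R=O(|x|^{m+1}),\qquad DR=O(|x|^{m}).
\]
A blow-up $r^{-m}Du(rx)\to DQ(x)$ shows that the orientation sign of $\det D^2u$ fixed by Lemma~\ref{lem:orientation} passes to $\det D^2Q$, by a degree and homotopy argument on small spheres, first at regular points of $DQ$ and then everywhere by density. So $\det D^2Q$ has a fixed weak sign on $\R^3$; multiplying $Q$ by $-1$ if needed, we may assume $\det D^2Q\le0$. Since $\tr D^2Q=0$, \eqref{eq:trace-sign} shows that $D^2Q$ has at most one negative eigenvalue everywhere. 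Moreover $\det D^2Q(0)=0$ because $m\ge2$, so Theorem~\ref{thm:GW} gives $\det D^2Q\equiv0$.

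By Theorem~\ref{thm:GN} and the real reduction after it, $Q=Q_0(X_1,X_2)$ after a linear change. Then $\Delta Q=0$ forces $Q_0$ to be a planar harmonic homogeneous polynomial of degree $m+1$, i.e.\ $Q_0=\operatorname{Re}(\bar c\,\zeta^{m+1})$ up to constants. Consequently $\partial_1Q-i\partial_2Q=c'\zeta^m$ with $c'\ne0$, and $\partial_3Q=0$. Hence $P=G_1-iG_2$, after the target change, has the form $c'\zeta^m+R$ with the error bounds \eqref{eq:mon-error}. Lemma~\ref{lem:monodromy} (with conjugation of the target absorbed into the linear changes) says that $Du$ is not injective near $0$, contradicting the hypothesis. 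The main obstacle is transferring the sign condition from $\det D^2u$ to $\det D^2Q$. Convergence of $r^{-m}Du(r\cdot)$ to $DQ$ in $C^1$ on annuli should give it: at a point where $\det D^2Q\neq0$, the sign of $\det D^2u$ at the corresponding rescaled point equals the sign of $\det D^2Q$ for small $r$. It remains to check that $\det D^2Q\not\equiv0$ is not needed there, which holds because the weak sign is trivially true if $\det D^2Q\equiv0$.
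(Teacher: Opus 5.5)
Your proposal is correct and follows essentially the paper's own proof: strong unique continuation gives a finite-order leading term, harmonicity comes from normalizing $a(0)=I$ (the paper instead sets $Q(X)=P_d(a(q_0)^{1/2}X)$), the sign is transferred through the rescaled determinant expansion, and Gleason--Wolff, Gordan--Noether and the monodromy lemma then finish the argument. One point to tidy: take the Gordan--Noether change of variables orthogonal (possible, since it only produces a direction of constancy), or follow it by a planar linear change as the paper does, so that $\Delta Q=0$ genuinely makes $Q_0$ planar harmonic.
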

\begin{proof}

Interior Schauder bootstrapping first gives $u\in C^\infty(\Omega)$. Suppose
\[
 D^2u(0)=0,
 \qquad q_0=Du(0).
\]
Let
\[
 \ell(x)=u(0)+q_0\cdot x,
 \qquad v=u-\ell.
\]
Since $D^2\ell=0$, the original equation itself becomes the linear equation
\begin{equation}\label{eq:v-linear}
 A^{ij}(x)v_{ij}=0,
 \qquad A(x)=a(Du(x)).
\end{equation}
The dependence of $A$ on the already fixed solution $u$ causes no difficulty:
on a sufficiently small ball, $A$ is a smooth, symmetric, uniformly positive
definite coefficient field.  In divergence form, \eqref{eq:v-linear} is
\begin{equation}\label{eq:v-divergence}
 \partial_i(A^{ij}\partial_jv)+b^j\partial_jv=0,
 \qquad b^j=-\partial_iA^{ij}.
\end{equation}
After multiplying by $-1$ if necessary, the strong unique continuation
principle recorded in \cref{sec:prelim} applies.  If every derivative of $v$
vanished at the origin, Taylor's theorem would give,
for every positive integer $N$,
\[
 |v(x)|=O(|x|^N).
\]
Hence $v$ would vanish to infinite order.  \Cref{strong unique-continuation}
would imply $v\equiv0$ in a neighborhood of the origin.  This would make
$Du\equiv q_0$ there, contradicting that $Du$ is a local homeomorphism.  It
follows that $v$ has finite order of vanishing.  Since
\[
 v(0)=0,\qquad Dv(0)=0,\qquad D^2v(0)=0,
\]
there is an integer $d\ge3$ and a nonzero homogeneous polynomial $P_d$ of
degree $d$ such that
\[
 u(x)=u(0)+q_0\cdot x+P_d(x)+R(x),
\]
where
\[
 R(x)=O(|x|^{d+1}),
 \quad DR(x)=O(|x|^d),
 \quad D^2R(x)=O(|x|^{d-1}).
\]
This is an ordinary finite-order Taylor expansion; no analyticity is used.  For
$x=r\theta$, homogeneity gives
\[
 Du(r\theta)=q_0+r^{d-1}DP_d(\theta)+O(r^d),
\]
\[
 D^2u(r\theta)=r^{d-2}D^2P_d(\theta)+O(r^{d-1}).
\]
Since $a$ is smooth,
\[
 a(Du(r\theta))=a(q_0)+O(r^{d-1}).
\]
Substituting into the equation,
\begin{align*}
0&=a^{ij}(Du(r\theta))u_{ij}(r\theta)\\
 &=\bigl(a^{ij}(q_0)+O(r^{d-1})\bigr)
   \bigl(r^{d-2}(P_d)_{ij}(\theta)+O(r^{d-1})\bigr)\\
 &=r^{d-2}a^{ij}(q_0)(P_d)_{ij}(\theta)+O(r^{d-1}).
\end{align*}
Dividing by $r^{d-2}$ and letting $r\to 0$, we get
\[
 a^{ij}(q_0)(P_d)_{ij}(\theta)=0.
\]
Set $A_0=a(q_0)$ and choose its positive square root $L=A_0^{1/2}$.  Define
\[
 Q(X)=P_d(LX).
\]
Then
\[
 \Delta Q(X)
 =\tr\bigl(L^TD^2P_d(LX)L\bigr)
 =A_0^{ij}(P_d)_{ij}(LX)=0.
\]
Moreover,
\[
 D^2Q(X)=L^TD^2P_d(LX)L,
\]
and hence
\[
 \det D^2Q(X)=(\det L)^2\det D^2P_d(LX).
\]
The two determinants have the same sign pattern.  By \cref{lem:orientation},
$\det D^2u$ has a fixed sign near the origin.
On the other hand,
\begin{equation}\label{eq:det-expansion}
 \det D^2u(r\theta)
 =r^{3(d-2)}\det D^2P_d(\theta)+O(r^{3d-5}).
\end{equation}
If $\det D^2P_d$ took both positive and negative values on the sphere, then
\eqref{eq:det-expansion} would force $\det D^2u$ to take both signs in every
neighborhood of the origin.  Therefore $\det D^2P_d$, and thus
$\det D^2Q$, has a fixed sign.  If $\det D^2Q\le0$, the trace-free matrix
$D^2Q$ has at most one negative
eigenvalue by \eqref{eq:trace-sign}.  Since $d\ge3$,
\[
 D^2Q(0)=0.
\]
The Gleason--Wolff theorem gives
\[
 \det D^2Q\equiv0.
\autotag\label{eq:Q-zero-Hessian}
\]
If $\det D^2Q\ge0$, apply the same argument to $-Q$.  Thus
\eqref{eq:Q-zero-Hessian} holds in either case.

By \cref{thm:GN}, after a real invertible linear change,
\[
 Q(X)=Q_0(X_1,X_2).
\]
Under this change of variables the Euclidean Laplacian becomes a
constant-coefficient positive definite operator.  Its restriction to functions
independent of $X_3$ is a positive definite two-dimensional operator.  A further
real linear change in $(X_1,X_2)$ reduces it to the planar Laplacian.  Since $Q_0$
is homogeneous and nonzero,
\[
 Q_0(X_1,X_2)
 =\Re\bigl(c(X_1+iX_2)^d\bigr)
\]
for some $c\ne0$.  Returning to the original variables only composes the
leading gradient with
invertible linear maps in the domain and target.  Thus, after such maps,
\[
 Du(x)-Du(0)
 =\left(
 \Re(c_1\zeta^{d-1}),
 -\Im(c_1\zeta^{d-1}),
 0
 \right)+O(|(\zeta,t)|^d),
\]
where $c_1\ne0$ and $\zeta=\xi+i\eta$.  The expansion is valid in $C^1$, so the
remainder in the first two components satisfies the derivative estimate in
\cref{lem:monodromy} with $m=d-1\ge2$.  That lemma contradicts the local
injectivity of $Du$.
\end{proof}

\begin{proof}[Proof of \cref{thm:local-intro}]
Combining \cref{prop:rankone,prop:ranktwo,prop:rankzero}, we have
$\det D^2u(x)\ne0$ for every $x\in\Omega$.  The inverse function theorem gives
the local-diffeomorphism
conclusion.  If $Du$ is globally injective onto its image, its local smooth
inverse branches agree, yielding a global diffeomorphism onto $Du(\Omega)$.
\end{proof}
\section{The convex-ring theorem in arbitrary dimension}
\label{sec:ring}

The local theorem above does not assume convex level sets.  Conversely, strict
level-set convexity gives a stronger result by elementary matrix geometry.  In
this section $n\ge2$, and analyticity is not assumed.  We first prove the
following Schur-complement lemma.

\begin{lemma}\label{lem:ring-algebra}
Let
\[
 H=\begin{pmatrix}-C&b\\b^T&d\end{pmatrix}\in\Sym_n,
 \qquad C\in\Sym_{n-1},\quad C>0,
\]
and let
\[
 A_*=\begin{pmatrix}A&\beta\\\beta^T&c\end{pmatrix}\in\Sym_n,
 \qquad A_*>0.
\]
If
\[
 A_*^{ij}H_{ij}=0,
\]
then the Schur complement
\[
 S=d+b^TC^{-1}b
\]
is positive.  More precisely,
\begin{align}
 cS={}&\tr\left[\left(A-\frac{\beta\beta^T}{c}\right)C\right]\notag\\
 &+c\left(b-\frac{C\beta}{c}\right)^TC^{-1}
      \left(b-\frac{C\beta}{c}\right)>0.
\label{eq:ring-square}\autotag
\end{align}
Consequently $H$ has inertia $(1,n-1)$ and
\[
 (-1)^{n-1}\det H>0.
\]
\end{lemma}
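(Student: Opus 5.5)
Expanding the trace $A_*^{ij}H_{ij}=\tr(A_*H)$ blockwise, the identity \eqref{eq:ring-square} follows from a completed square; positivity of the two terms on its right-hand side and the Schur-complement inertia formula then give the remaining conclusions. Block multiplication gives
\[
 0=\tr(A_*H)=-\tr(AC)+2\beta^Tb+cd .
\]
Solving for $d$ yields $cd=\tr(AC)-2\beta^Tb$, and hence
\[
 cS=cd+c\,b^TC^{-1}b=\tr(AC)-2\beta^Tb+c\,b^TC^{-1}b .
\]
Expanding the quadratic term in \eqref{eq:ring-square} produces
\[
 c\Bigl(b-\tfrac{C\beta}{c}\Bigr)^TC^{-1}\Bigl(b-\tfrac{C\beta}{c}\Bigr)
 =c\,b^TC^{-1}b-2\beta^Tb+\tfrac{1}{c}\beta^TC\beta .
\]
The last term equals $\tr(\beta\beta^TC)/c$, and it cancels against the $-\tr(\beta\beta^TC)/c$ contained in $\tr[(A-\beta\beta^T/c)C]$. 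This verifies the identity, using only the symmetry of $C$.

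For strict positivity, note that $c=e_n^TA_*e_n>0$. The matrix $A-\beta\beta^T/c$ is the Schur complement of $c$ in $A_*$, so it is positive definite because $A_*>0$. Write $E=A-\beta\beta^T/c$. Then
\[
 \tr(EC)=\tr\bigl(C^{1/2}EC^{1/2}\bigr)>0,
\]
since $C^{1/2}EC^{1/2}$ is positive definite and $n-1\ge1$. The quadratic term is nonnegative because $C^{-1}>0$ and $c>0$. Together these give $cS>0$, and therefore $S>0$.

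For the inertia, I would use the block factorization
\[
 H=\begin{pmatrix}I&0\\-b^TC^{-1}&1\end{pmatrix}^{-1}\!\!
 \begin{pmatrix}-C&0\\0&S\end{pmatrix}
 \begin{pmatrix}I&-C^{-1}b\\0&1\end{pmatrix}^{-1}.
\]
This can be checked directly: the entry $d=-b^TC^{-1}b+S$ is recovered by multiplying out. The two outer factors are transposes of each other, so $H$ is congruent to $\operatorname{diag}(-C,S)$. Sylvester's law of inertia then shows that $H$ has inertia $(1,n-1)$, because $-C<0$ and $S>0$. Finally,
\[
 \det H=\det(-C)\,S=(-1)^{n-1}\det C\,S ,
\]
so $(-1)^{n-1}\det H>0$. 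I expect no real obstacle. The only point needing care is the bookkeeping in the completed square: the cross terms $2\beta^Tb$ must carry matching signs, which relies on $C$ being symmetric so that $\beta^TCC^{-1}b=\beta^Tb$.
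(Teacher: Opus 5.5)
Your argument is correct and is the same as the paper's: expand $\tr(A_*H)$, complete the square, use positivity of the Schur complement $A-\beta\beta^T/c$ together with $C^{-1}>0$, and then read off the inertia and determinant from the Schur-complement congruence. One small slip: with the inverses, your displayed factorization multiplies out to $\begin{pmatrix}-C&-b\\-b^T&d\end{pmatrix}$ rather than $H$. Drop the inverses, i.e.\ write $H=L\,\operatorname{diag}(-C,S)\,L^T$ with $L=\begin{pmatrix}I&0\\-b^TC^{-1}&1\end{pmatrix}$; the inertia and determinant conclusions are then unchanged.
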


\begin{proof}
The equation $A_*^{ij}H_{ij}=0$ is equivalent to
\[
 -\tr(AC)+2\beta^Tb+cd=0.
\]
Hence
\[
 cS=\tr(AC)-2\beta^Tb+c b^TC^{-1}b.
\]
Completing the square gives \eqref{eq:ring-square}.  Since $A_*>0$, its Schur
complement satisfies
\[
 A-\frac{\beta\beta^T}{c}>0.
\]
The first term on the right of \eqref{eq:ring-square} is therefore strictly
positive, while the second is nonnegative.  Finally,
\[
 \det H=\det(-C)S=(-1)^{n-1}\det C\,S.
\]
The inertia statement follows from the Schur-complement congruence.
\end{proof}

\begin{proof}[Proof of \cref{thm:ring-intro}]
Let $u$ satisfy the hypotheses of \cref{thm:ring-intro}.
The maximum principle gives $0<u<1$ in $\Omega$.  Set
\[
 s=|Du|,
 \qquad \nu=\frac{Du}{|Du|}.
\]
For $0<t<1$, the vector $\nu$ points toward the interior of the superlevel body
$K_t$, while the outward unit normal of $K_t$ is $-\nu$.  Choose an orthonormal
frame $e_1,\ldots,e_{n-1},e_n=\nu$.  If $h$ denotes the positive
definite second fundamental form of $\partial K_t$ with respect to its outward
normal, then
\[
 u_{\alpha\beta}
 =-s h_{\alpha\beta},
 \qquad 1\le\alpha,\beta\le n-1.
\]
Thus the tangential Hessian is negative definite.  Write
\[
 D^2u=\begin{pmatrix}-C&b\\b^T&d\end{pmatrix},
 \qquad C>0,
\]
and decompose $a(Du)$ in the same frame.  The equation and
\cref{lem:ring-algebra} yield
\[
 (-1)^{n-1}\det D^2u>0,
\autotag\label{eq:ring-det}
\]
and
\[
 \operatorname{Inertia}(D^2u)=(1,n-1).
\autotag\label{eq:ring-inertia}
\]
 
 Strict convexity makes the Gauss map of each $\partial K_t$ a diffeomorphism.
Let $x=x(t,\nu)$ be the unique point of $\partial K_t$ whose inward normal is
$\nu\in\Sph^{n-1}$.  Define
\[
 Du(x(t,\nu))=s(t,\nu)\nu.
\]
At fixed $\nu$, differentiate with respect to $t$:
\[
 D^2u(x(t,\nu))x_t=s_t\nu.
\]
Since $u(x(t,\nu))=t$,
\[
 s\nu\cdot x_t=1.
\autotag\label{eq:height-diff}
\]
The Hessian is invertible by \eqref{eq:ring-det}; hence
\[
 x_t=s_t(D^2u)^{-1}\nu.
\]
In the adapted block decomposition, the $(n,n)$ entry of $(D^2u)^{-1}$ is
$S^{-1}$, where
\[
 S=d+b^TC^{-1}b>0.
\]
Substitution into \eqref{eq:height-diff} gives
\begin{equation}\label{eq:s-monotone}
s_t(t,\nu)=\frac{S(t,\nu)}{s(t,\nu)}>0.
\end{equation}

Suppose $Du(x_1)=Du(x_2)$.  The two points have the same normal $\nu$ and the
same gradient magnitude.  By \eqref{eq:s-monotone}, they have the same level
value $t$.  The Gauss map of that strictly convex level surface is injective, so
$x_1=x_2$.  Thus $Du$ is globally injective.  Since it is already a local
diffeomorphism, it is a global diffeomorphism onto its image.

The Hopf boundary point lemma also gives $|Du|>0$ on both boundary components.  To describe
the image, let $x_i(\nu)\in\partial\Omega_i$ be the point at which
$Du/|Du|=\nu$, and define
\[
 \rho_i(\nu)=|Du(x_i(\nu))|,
 \qquad i=0,1.
\]
Equation \eqref{eq:s-monotone} gives $\rho_0(\nu)<\rho_1(\nu)$ and
\begin{equation}\label{eq:gradient-ring}
 Du(\Omega)
 =\{r\nu:\nu\in\Sph^{n-1},\ \rho_0(\nu)<r<\rho_1(\nu)\}.
\end{equation}
This completes the proof of \cref{thm:ring-intro}.
\end{proof}
\begin{remark}\label{rem:ring-low-reg}
The pointwise determinant conclusion uses only continuity and positive
definiteness of $a$, twice differentiability of $u$, nonvanishing gradient, and
strict convexity of the level hypersurface through the point.  The additional
regularity in \cref{thm:ring-intro} is convenient for the global Gauss-map
parametrization.
\end{remark}


It is useful to record precisely how \cref{thm:ring-intro} relates to the
classical constant-rank theory.  For $q\ne0$, set $\hat q=q/|q|$ and consider
the radial two-eigenvalue field
\begin{equation}\label{eq:korevaar-field}
 a_{A,B}(q)
 =A(|q|)(I-\hat q\otimes\hat q)+B(|q|)\hat q\otimes\hat q,
 \qquad A,B>0.
\end{equation}
Thus $a_{A,B}^{ij}(Du)u_{ij}=0$ is
\[
 A(\mu)(\Delta u-u_{\nu\nu})+B(\mu)u_{\nu\nu}=0,
 \qquad \mu=|Du|,\quad \nu=\frac{Du}{|Du|}.
\]
Korevaar proved that, for noncritical solutions with convex level sets, the
second fundamental forms of the level sets have constant rank throughout the
connected ring provided
\begin{equation}\label{eq:korevaar-condition}
 \left(\sqrt{\frac{A}{B}}\right)''(\mu)\ge0.
\end{equation}
This is the homogeneous, gradient-only specialization of his more general
condition.  Condition \eqref{eq:korevaar-condition} is a constant-rank
criterion under the stated convexity and noncriticality hypotheses; the full
convex-ring theorem additionally uses barriers, comparison, and deformation to
establish those hypotheses.  The following corollary combines this geometric
result with our Schur-complement argument.

\begin{corollary}\label{cor:korevaar-ring}
Let $\Omega=\Omega_0\setminus\overline{\Omega_1}$ be a smooth strictly convex
ring in $\R^n$.  Let $A,B\in C^2(I)$ be positive on an open interval
$I\Subset(0,\infty)$ and satisfy \eqref{eq:korevaar-condition}.  Suppose
$u\in C^4(\Omega)\cap C^2(\overline\Omega)$ solves
\[
 a_{A,B}^{ij}(Du)u_{ij}=0,\qquad
 u|_{\partial\Omega_0}=0,\quad u|_{\partial\Omega_1}=1,
\]
with $|Du(x)|\in I$ for every $x\in\overline\Omega$.  Assume that the
intermediate level hypersurfaces are convex
and that the second fundamental form is positive definite at least at one point
of one intermediate level.  Then every intermediate level hypersurface is
strictly convex and
\[
 (-1)^{n-1}\det D^2u>0\qquad\text{in }\Omega.
\]
The Hessian has inertia $(1,n-1)$, and $Du$ is a global diffeomorphism onto the
radial ring \eqref{eq:gradient-ring}.
\end{corollary}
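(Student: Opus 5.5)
The plan is to reduce \cref{cor:korevaar-ring} to \cref{thm:ring-intro}. The only hypothesis of that theorem not already assumed is that every superlevel body $K_t$, $0\le t\le 1$, has smooth strictly convex boundary.

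\textbf{Checking the standing hypotheses.} The field $a_{A,B}$ is continuous and positive definite on the open set $\mathcal U=\{q:|q|\in I\}$, since its eigenvalues are $A(|q|)$ with multiplicity $n-1$ and $B(|q|)$. The assumption $|Du|\in I$ on $\overline\Omega$ gives $Du(\overline\Omega)\subset\mathcal U$ and $|Du|>0$ there, because $I\subset(0,\infty)$. Since $|Du|>0$, each intermediate level set $\{u=t\}$, $0<t<1$, is a smooth hypersurface. The boundary levels $t=0,1$ are $\partial\Omega_0$ and $\partial\Omega_1$, which are strictly convex by assumption; for these, $K_0=\overline{\Omega_0}$ and $K_1=\overline{\Omega_1}$.

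\textbf{Strict convexity of the intermediate levels.} The level hypersurfaces are assumed convex, and the solution is noncritical and of class $C^4$ (the regularity the constant-rank computation needs). Korevaar's constant-rank theorem under \eqref{eq:korevaar-condition} therefore applies. It says the rank of the second fundamental form $h$ of the level sets is constant on the connected ring $\Omega$. Here $\Omega$ is connected because it is a ring between nested convex domains. At one point of one intermediate level $h$ is positive definite, so its rank equals $n-1$ there. By constancy the rank is $n-1$ everywhere in $\Omega$. Combined with convexity ($h\ge0$), this means $h>0$ on every intermediate level.

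I expect the main point of care to be this step. Korevaar's theorem is formulated for the rank of $h$ as a function on $\Omega$, not level by level, so one must make sure the theorem is applied on the whole connected open set and is not obstructed by the gradient-only specialization. I would first verify that the stated $C^2$ regularity of $A,B$ and $C^4$ regularity of $u$ match the hypotheses of the constant-rank argument, citing \cite{Korevaar}.

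\textbf{Convexity of the bodies $K_t$ and conclusion.} Each $\partial K_t=\{u=t\}$ is a compact connected smooth hypersurface with positive definite second fundamental form. Its sign is taken with respect to $-\nu$, the outward normal of $K_t$, because $u$ increases toward $\Omega_1$. By Hadamard's theorem such a hypersurface bounds a strictly convex body, which is $K_t$. All hypotheses of \cref{thm:ring-intro} then hold, and it gives $(-1)^{n-1}\det D^2u>0$, inertia $(1,n-1)$ through \cref{lem:ring-algebra}, and the global diffeomorphism onto the ring \eqref{eq:gradient-ring}.
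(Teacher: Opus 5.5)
Your proposal is correct and follows the paper's proof: Korevaar's constant-rank theorem on the connected ring, combined with positive definiteness of the second fundamental form at one point, forces rank $n-1$ on every intermediate level, and \cref{thm:ring-intro} then yields the determinant sign, the inertia $(1,n-1)$, and the global gradient diffeomorphism. The paper leaves implicit several checks you carry out explicitly: that $a_{A,B}$ is continuous and positive definite on $\{q:|q|\in I\}$, that the boundary levels $t=0,1$ are strictly convex, and that each $K_t$ is a convex body.
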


\begin{proof}
Korevaar's constant-rank theorem \cite{Korevaar} says that the second
fundamental forms of the level hypersurfaces have constant rank throughout the
connected ring.  Positivity at one point therefore forces that rank to be
$n-1$ everywhere.  The conclusion now follows from
\cref{thm:ring-intro}.
\end{proof}

\begin{remark}\label{rem:convexity-literature}
The hypotheses preceding strictness in \cref{cor:korevaar-ring} can themselves
be consequences of convex-ring PDE theory.  Korevaar's barrier, comparison, and
deformation arguments produce noncritical solutions with strictly convex levels
for operators satisfying his structural assumptions \cite{Korevaar}.  At the
macroscopic level, quasiconcave-envelope criteria were obtained by Bianchini,
Longinetti and Salani \cite{BianchiniLonginettiSalani}; at the microscopic
level, broader constant-rank and curvature estimates were established by Bian,
Guan, Ma and Xu and by Guan and Xu \cite{BianGuanMaXu,GuanXu}.  Our
contribution starts after strict convexity has been established and imposes no
further differential condition on $a(q)$.
\end{remark}

\section{The \texorpdfstring{$p$}{p}-Laplace and minimal surface equations}\label{sec:examples}

\subsection{The \texorpdfstring{$p$}{p}-Laplace equation}

 Let $1<p<\infty$ and suppose $|Du|>0$.  In $\R^n$ the equation
\[
 \operatorname{div}(|Du|^{p-2}Du)=0
\]
is equivalent, after division by $|Du|^{p-2}$, to
\[
 a_p^{ij}(Du)u_{ij}=0,
 \qquad
 a_p(q)=I+(p-2)\frac{q\otimes q}{|q|^2}.
\autotag\label{eq:ap}
\]
The eigenvalues of $a_p$ are
\[
 1\quad\text{with multiplicity }n-1,
 \qquad p-1\quad\text{in the radial direction}.
\]
The matrix is real analytic on $\R^n\setminus\{0\}$ and positive definite.
Specializing to $n=3$, \cref{thm:local-intro} gives the following result, recovering
\cite{LiLiuMa}.

\begin{corollary}[$p$-harmonic local Lewy theorem]\label{cor:p-local}
Let $u\in W^{1,p}_{\mathrm{loc}}(\Omega)$ be a weak $p$-harmonic function in
$\Omega\subset\R^3$ and suppose $|Du|>0$ in $\Omega$.  If $Du$ is a local
homeomorphism, then $\det D^2u$ is nowhere zero and $Du$ is a local
real-analytic diffeomorphism.  If $Du$ is globally injective, it is a global
real-analytic diffeomorphism onto $Du(\Omega)$.
\end{corollary}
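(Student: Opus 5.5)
The plan is to reduce \cref{cor:p-local} to \cref{thm:local-intro} applied with $\mathcal U=\R^3\setminus\{0\}$ and the coefficient field $a_p$ from \eqref{eq:ap}. Two things must be checked: that this field is admissible, and that a weak $p$-harmonic function with $|Du|>0$ has the regularity the theorem requires.

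\textbf{Step 1: admissibility of $a_p$.} On $\R^3\setminus\{0\}$ the matrix $a_p(q)=I+(p-2)\hat q\otimes\hat q$ is real analytic. Its eigenvalues are $1$ and $p-1$, both positive for $1<p<\infty$, so $a_p(q)>0$ there. The hypothesis $|Du|>0$ gives $Du(\Omega)\subset\mathcal U$.

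\textbf{Step 2: regularity, which is the main point to justify.} Weak $p$-harmonic functions are $C^{1,\beta}_{\mathrm{loc}}$ by the classical results of Ural'tseva, Uhlenbeck, DiBenedetto and Tolksdorf.

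Fix $x_0$. By continuity, $|Du|\ge m>0$ on a ball $B$ around $x_0$. On $B$ the equation $\operatorname{div}(|Du|^{p-2}Du)=0$ is uniformly elliptic, since the flux is smooth and strictly monotone on the compact gradient range. Standard difference-quotient arguments then give $u\in W^{2,2}_{\mathrm{loc}}(B)$. Each derivative $u_k$ weakly solves the linear divergence-form equation
\[
\partial_i\bigl(|Du|^{p-2}a_p^{ij}(Du)\partial_j u_k\bigr)=0,
\]
whose coefficients are $C^{0,\beta}$. Schauder theory for divergence-form equations yields $u_k\in C^{1,\beta}$, so $u\in C^{2,\beta}_{\mathrm{loc}}$. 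At that point $u$ is a classical solution of $\operatorname{div}(|Du|^{p-2}Du)=0$. Expanding the divergence and dividing by $|Du|^{p-2}$ gives $a_p^{ij}(Du)u_{ij}=0$ pointwise.

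\textbf{Step 3: apply the local theorem and upgrade to analyticity.} \cref{thm:local-intro} now gives $\det D^2u\ne0$ everywhere and shows that $Du$ is a local $C^\infty$-diffeomorphism. Since $a_p$ is analytic on $\mathcal U$, analytic elliptic regularity \cite{Morrey} gives $u\in C^\omega(\Omega)$, so $Du$ is real analytic. The analytic inverse function theorem then shows the local inverses are analytic. If $Du$ is globally injective, these local analytic inverse branches agree on overlaps, and $Du$ is a global real-analytic diffeomorphism onto $Du(\Omega)$.

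The only nonformal ingredient is Step 2, passing from weak solutions to $C^{2,\alpha}$ solutions away from the critical set. I would simply cite the standard $C^{1,\beta}$ theory followed by the linearized Schauder bootstrap.
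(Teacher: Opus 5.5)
Your proposal is correct and follows the paper's proof. You check that $a_p$ is smooth (indeed analytic) and positive definite on $\R^3\setminus\{0\}$, upgrade the weak solution to $C^{2,\alpha}$ away from critical points, apply \cref{thm:local-intro}, and finish with analytic elliptic regularity and the analytic inverse function theorem, exactly as the paper does. Your Step 2 simply spells out the ``standard noncritical regularity'' that the paper cites in a single line.
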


\begin{proof}
The coefficient $a_p$ is real analytic and positive definite on
$\R^3\setminus\{0\}$.  Standard noncritical regularity followed by analytic
elliptic regularity makes $u$ real analytic locally in $\Omega$.  Apply
\cref{thm:local-intro} with $a=a_p$, and then use the analytic inverse function
theorem.
\end{proof}

For the capacitary problem the geometric hypotheses of
\cref{thm:ring-intro} need not be imposed on the solution.  Lewis' extension of
Gabriel's convexity theorem gives noncriticality and strict convexity of every
intermediate level for bounded convex rings \cite{Gabriel,Lewis}; see
\cite[Theorem~1.1]{MaOuZhang}.  Thus even strict convexity of the two boundary
components is unnecessary for the interior Lewy conclusion.

\begin{corollary}[$p$-capacitary gradient map]\label{cor:p-ring}
Let $1<p<\infty$ and $n\ge2$.  Let $\Omega_0$ and $\Omega_1$ be bounded smooth
convex domains in $\R^n$ with $\overline{\Omega_1}\Subset\Omega_0$, set
$\Omega=\Omega_0\setminus\overline{\Omega_1}$, and let $u$ be the
$p$-capacitary potential
\[
 \begin{cases}
 \operatorname{div}(|Du|^{p-2}Du)=0
     &\text{in }\Omega,\\
 u=0&\text{on }\partial\Omega_0,\\
 u=1&\text{on }\partial\Omega_1.
 \end{cases}
\]
Then
\[
 (-1)^{n-1}\det D^2u>0\qquad\text{in }\Omega,
\]
the Hessian has inertia $(1,n-1)$, and $Du$ is a global real-analytic
diffeomorphism onto its image.  More precisely, writing
$Du(x(t,\nu))=s(t,\nu)\nu$ on the level $u=t$, the limits
\[
 \rho_0(\nu)=\lim_{t\downarrow0}s(t,\nu),\qquad
 \rho_1(\nu)=\lim_{t\uparrow1}s(t,\nu)
\]
satisfy $0<\rho_0(\nu)<\rho_1(\nu)<\infty$ and
\[
 Du(\Omega)
 =\{r\nu:\nu\in\Sph^{n-1},\ \rho_0(\nu)<r<\rho_1(\nu)\}.
\]
\end{corollary}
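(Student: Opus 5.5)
The plan is to reduce Corollary~\ref{cor:p-ring} to \cref{thm:ring-intro} with the coefficient field $a=a_p$ from \eqref{eq:ap}. The work is in supplying the hypotheses of that theorem, which are the ones not assumed here: regularity, noncriticality up to the boundary, and strict convexity of every level set. Strict convexity of the two boundary components is not assumed either.

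First, I invoke the Gabriel--Lewis theorem in the form \cite[Theorem~1.1]{MaOuZhang}. For bounded convex rings the $p$-capacitary potential satisfies $|Du|>0$ in $\Omega$, and every intermediate level set $\{u=t\}$, $0<t<1$, is a smooth strictly convex hypersurface. Once $|Du|>0$ is known, the equation is $a_p^{ij}(Du)u_{ij}=0$, with $a_p$ real analytic and uniformly positive definite on the compact gradient range over any compact subset. The usual $C^{1,\alpha}$ regularity, followed by Schauder bootstrapping and analytic elliptic regularity \cite{Morrey}, then gives $u\in C^\omega(\Omega)$.

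To avoid needing boundary regularity or strict convexity of $\partial\Omega_i$, I apply \cref{thm:ring-intro} on subrings rather than on $\Omega$. For $0<t_0<t_1<1$, the set $\Omega_{t_0,t_1}=\{t_0<u<t_1\}$ is a ring bounded by two smooth strictly convex level hypersurfaces. The function $(u-t_0)/(t_1-t_0)$ is then a capacitary solution there, with reversed boundary values after the obvious relabeling. It is smooth up to the boundary, noncritical on the closure, and all its superlevel bodies are strictly convex. \Cref{thm:ring-intro} then yields $(-1)^{n-1}\det D^2u>0$ and inertia $(1,n-1)$ on each subring. Alternatively, \cref{rem:ring-low-reg} gives this pointwise. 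Exhausting $\Omega$ by these subrings gives the determinant and inertia conclusions on all of $\Omega$.

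Global injectivity follows as in the proof of \cref{thm:ring-intro}. The Gauss-map parametrization $Du(x(t,\nu))=s(t,\nu)\nu$ is valid for all $t\in(0,1)$, and \eqref{eq:s-monotone} gives $s_t>0$. Two points with the same gradient therefore lie on the same level and have the same normal, so they coincide. Real analyticity of the inverse comes from the analytic inverse function theorem, since $u$ is analytic and $\det D^2u\ne0$.

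The main obstacle is the description of the image and the bounds $0<\rho_0<\rho_1<\infty$. Monotonicity in $t$ means the limits $\rho_0(\nu)=\lim_{t\downarrow0}s$ and $\rho_1(\nu)=\lim_{t\uparrow1}s$ exist in $[0,\infty]$, and the image is exactly $\{r\nu:\rho_0(\nu)<r<\rho_1(\nu)\}$. Strict monotonicity gives $\rho_0<\rho_1$ provided both are finite. For finiteness I use the Lipschitz bound $|Du|\le C$ for $p$-capacitary potentials on convex rings, which follows from convex-domain barrier arguments. For $\rho_0>0$ I first try a Hopf-type lower bound at $\partial\Omega_0$ using a $p$-harmonic radial barrier in an interior tangent ball. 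If $\partial\Omega_0$ is merely convex, I instead use the monotonicity itself: $s(t,\nu)\ge s(t_*,\nu)>0$ for $t\ge t_*$ gives nothing at $t\downarrow0$. So I rely on the gradient bound $|Du|\ge c>0$ near the boundary from \cite{MaOuZhang}/Lewis, which is part of their noncriticality statement. That citation is the step I regard as least self-contained.
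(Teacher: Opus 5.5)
Your proposal is correct and follows the paper's proof. Lewis' theorem gives noncriticality and strict convexity of the intermediate levels. The Schur-complement and Gauss-map arguments of \cref{thm:ring-intro} are then run only on those levels. Your exhaustion by subrings $\{t_0<u<t_1\}$ is a tidy way to make this precise. Note that the rescaled function already equals $0$ on the outer and $1$ on the inner level, so no reversal is needed, and it solves the same equation because $a_p$ is homogeneous of degree zero. Finally, boundary regularity plus the Hopf lemma gives $0<\rho_0<\rho_1<\infty$.

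The only wobble is your last paragraph. The interior-tangent-ball Hopf barrier you first proposed already works: $\partial\Omega_0$ is assumed smooth, and the interior ball condition needs only that regularity, not strict convexity. Combined with $C^{1,\alpha}(\overline\Omega)$ regularity, this makes $|Du|$ continuous and nonvanishing on the compact set $\overline\Omega$, which is exactly the paper's argument. So the fallback citation is unnecessary. The lower bound is a boundary-regularity fact rather than a convexity fact: for nonsmooth convex $\Omega_0$ it fails, since $|Du|\to0$ at corners.
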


\begin{proof}
By Lewis' convex-ring theorem, extending Gabriel's harmonic result,
$Du\ne0$ in the ring and every intermediate level hypersurface is strictly
convex.  The pointwise and global-injectivity
arguments in the proof of \cref{thm:ring-intro} use only those intermediate
levels, so they apply even though the boundary components need not be strictly
convex.  Formula \eqref{eq:s-monotone} shows that $s(t,\nu)$ is strictly
increasing and gives the asserted endpoint description.  Boundary regularity
and the Hopf boundary point lemma give a continuous nonvanishing gradient on
the compact set $\overline\Omega$.  Thus $|Du|$ is bounded above and bounded
away from zero there, which proves
$0<\rho_0(\nu)<\rho_1(\nu)<\infty$.  Finally, $a_p$ is real analytic away
from the origin; analytic elliptic regularity and the analytic inverse
function theorem show that the global diffeomorphism is real analytic.
\end{proof}

\begin{remark}
Lewis' qualitative theorem therefore supplies exactly the geometric hypotheses
needed here.  If the boundary components are strictly convex, the
Gaussian-curvature estimates of Ma, Ou and Zhang give quantitative positive
lower bounds in terms of boundary curvature and boundary gradient data
\cite[Theorem~1.2 and Corollary~1.3]{MaOuZhang}; such estimates supplement, but
are not required for, the nondegeneracy and injectivity conclusion above.
\end{remark}
\subsection{The minimal surface equation}

The minimal surface equation is
\[
 \operatorname{div}\left(\frac{Du}{\sqrt{1+|Du|^2}}\right)=0.
\]
After multiplication by $\sqrt{1+|Du|^2}$, it becomes
\[
 a_{\mathrm{ms}}^{ij}(Du)u_{ij}=0,
 \qquad
 a_{\mathrm{ms}}(q)
 =I-\frac{q\otimes q}{1+|q|^2}.
\autotag\label{eq:ams}
\]
The eigenvalues are
\[
 1\quad\text{with multiplicity }n-1,
 \qquad \frac1{1+|q|^2}\quad\text{in the radial direction}.
\]
Unlike \eqref{eq:ap}, the matrix \eqref{eq:ams} is real analytic and positive
definite in $\R^n$.  In dimension three no nonvanishing-gradient
assumption is needed in the local theorem.

\begin{corollary}[Minimal-surface local Lewy theorem]\label{cor:min-local}
Let $u\in C^{2,\alpha}_{\mathrm{loc}}(\Omega)$ solve the minimal surface equation
in a domain $\Omega\subset\R^3$.  If $Du$ is a local homeomorphism, then
\[
 \det D^2u\ne0
\]
throughout $\Omega$, and $Du$ is a local real-analytic diffeomorphism.  If $Du$
is globally injective, it is a global real-analytic diffeomorphism onto
$Du(\Omega)$.
\end{corollary}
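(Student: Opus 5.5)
The plan is to reduce \cref{cor:min-local} to \cref{thm:local-intro} with the coefficient field $a=a_{\mathrm{ms}}$ from \eqref{eq:ams} and $\mathcal U=\R^3$. First I would record the properties of this field. It is symmetric. It is real analytic on all of $\R^3$, since $1+|q|^2\ge1$ never vanishes. It is positive definite at every $q$: its eigenvalues are $1$ on $q^\perp$ and $1/(1+|q|^2)>0$ in the direction of $q$, and for $q=0$ it is the identity. So no restriction on the gradient range is needed, and the inclusion $Du(\Omega)\subset\mathcal U$ holds trivially. This is exactly why the nonvanishing-gradient assumption of \cref{cor:p-local} can be dropped here.

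Next I would check that a $C^{2,\alpha}_{\mathrm{loc}}$ solution of the divergence-form minimal surface equation satisfies \eqref{eq:main} pointwise. Expanding the divergence gives
\[
 \frac{1}{\sqrt{1+|Du|^2}}\Bigl(\Delta u-\frac{u_iu_ju_{ij}}{1+|Du|^2}\Bigr)=0 .
\]
Multiplying by the positive factor $\sqrt{1+|Du|^2}$ yields $a_{\mathrm{ms}}^{ij}(Du)u_{ij}=0$. Alternatively, one could invoke \cref{cor:variational-intro} directly with $F(q)=\sqrt{1+|q|^2}$, whose Hessian is
\[
 D^2F=\frac{a_{\mathrm{ms}}}{\sqrt{1+|q|^2}}>0 .
\]
Then \cref{thm:local-intro} applies and gives $\det D^2u\ne0$ throughout $\Omega$. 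The inverse function theorem makes $Du$ a local $C^\infty$-diffeomorphism, and a global one onto $Du(\Omega)$ if $Du$ is injective.

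The remaining point, and the only one needing any care, is the upgrade from $C^\infty$ to real analytic. Since $a_{\mathrm{ms}}$ is real analytic on $\R^3$, I would first bootstrap $u\in C^{2,\alpha}_{\mathrm{loc}}$ to $C^\infty$ by Schauder theory, as in \cref{sec:prelim}. Analytic elliptic regularity for nonlinear elliptic equations with analytic structure \cite{Morrey} then gives $u\in C^\omega(\Omega)$, so $Du$ is real analytic. Because $\det D^2u\neq0$, the analytic inverse function theorem shows that each local inverse is real analytic. In the injective case the local analytic inverses patch together into a global real-analytic inverse on $Du(\Omega)$. I expect no genuine obstacle: the substantive work is already contained in \cref{prop:ranktwo,prop:rankone,prop:rankzero}.
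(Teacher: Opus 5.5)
Your proposal is correct and follows the paper's proof. The paper likewise applies \cref{thm:local-intro} with $a=a_{\mathrm{ms}}$, which is real analytic and positive definite on all of $\R^3$, and obtains the real-analytic statements from analytic elliptic regularity and the analytic inverse function theorem. Your extra checks (the divergence expansion, the eigenvalue computation, the optional route through \cref{cor:variational-intro}, and the patching of local inverses) make explicit what the paper leaves implicit.
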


\begin{proof}
The coefficient $a_{\mathrm{ms}}$ is real analytic and positive definite on all
of $\R^3$.  Analytic elliptic regularity gives $u\in C^\omega(\Omega)$.  The
nonvanishing determinant follows from \cref{thm:local-intro}, and the analytic inverse
function theorem gives the asserted analytic diffeomorphism statements.
\end{proof}
For a smooth solution of the ring problem below, noncriticality and strict
convexity of the level sets follow from Korevaar's convex-ring theory
\cite{Korevaar}; see also the extension to space forms by Wang and Zhang
\cite[Theorem~1.2]{WangDekaiZhang}.  These geometric properties therefore need
not be imposed separately.  Existence of a smooth solution is assumed:
solvability can depend on the boundary height difference and the geometry of
the ring.  The boundary values $0$ and $1$ are prescribed here; rescaling $u$
alone does not preserve the minimal surface equation.  Quantitative curvature
refinements may be found in \cite{MaYeYe,WangZhang,ZhangZhang}.

\begin{corollary}[Minimal graphs on convex rings]\label{cor:min-ring}
Let $n\ge2$ and let $\Omega_0$ and $\Omega_1$ be bounded smooth strictly convex
domains in $\R^n$ with $\overline{\Omega_1}\Subset\Omega_0$.  Set
$\Omega=\Omega_0\setminus\overline{\Omega_1}$.  Suppose that the minimal
surface ring problem
\[
 \begin{cases}
 \displaystyle\operatorname{div}\left(\dfrac{Du}{\sqrt{1+|Du|^2}}\right)=0
 &\text{in }\Omega,\\
 u=0&\text{on }\partial\Omega_0,\\
 u=1&\text{on }\partial\Omega_1
 \end{cases}
\]
admits a solution $u\in C^\infty(\Omega)\cap C^2(\overline\Omega)$.
Then this solution is unique, $Du\ne0$ in $\overline\Omega$, and every
intermediate level hypersurface is strictly convex.  Moreover,
\[
 (-1)^{n-1}\det D^2u>0\qquad\text{in }\Omega,
\]
the Hessian has inertia $(1,n-1)$, and $Du$ is a global real-analytic
diffeomorphism onto the radially parametrized ring \eqref{eq:gradient-ring}.
\end{corollary}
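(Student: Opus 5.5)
The plan is to reduce \cref{cor:min-ring} to \cref{thm:ring-intro} with the coefficient field $a=a_{\mathrm{ms}}$ from \eqref{eq:ams}, and then upgrade the regularity using analyticity.

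\emph{Uniqueness.} This is a comparison argument. If $u,\tilde u$ are two solutions in $C^\infty(\Omega)\cap C^2(\overline\Omega)$, then $w=u-\tilde u$ satisfies a linear equation. One can write it in divergence form,
\[
\partial_i\bigl(\bar a^{ij}(x)\,w_j\bigr)=0,
\qquad
\bar a(x)=\int_0^1 D_qT\bigl(\tilde{Du}+\theta(Du-\tilde{Du})\bigr)\,\dd\theta,
\]
where $T(q)=q/\sqrt{1+|q|^2}$. Because $D_qT>0$ everywhere and the gradients are bounded on $\overline\Omega$, the field $\bar a$ is uniformly positive definite. Moreover $w=0$ on $\partial\Omega$, so the maximum principle gives $w\equiv0$.

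\emph{Geometric hypotheses.} Next I would verify the hypotheses of \cref{thm:ring-intro}. First, $a_{\mathrm{ms}}$ is analytic and positive definite on all of $\R^n$, so I may take $\mathcal U=\R^n$. Second, noncriticality and strict convexity of the intermediate levels come from Korevaar's convex-ring theory for the minimal surface operator (cf.\ the Wang--Zhang extension). Concretely, the operator has the radial two-eigenvalue form \eqref{eq:korevaar-field} with
\[
A\equiv1,
\qquad
B(\mu)=\frac{1}{1+\mu^2},
\qquad
\sqrt{A/B}=\sqrt{1+\mu^2},
\]
which is convex in $\mu$, so \eqref{eq:korevaar-condition} holds. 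Korevaar's deformation and comparison argument then yields convex, noncritical levels, and \cref{cor:korevaar-ring} upgrades this to strict convexity. On the boundary, the Hopf lemma gives $|Du|>0$ on $\partial\Omega_0\cup\partial\Omega_1$. The boundary superlevel bodies are $K_0=\overline{\Omega_0}$ and $K_1=\overline{\Omega_1}$, which are strictly convex by assumption.

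\emph{Main obstacle.} The most delicate step is this geometric input: certifying that Korevaar's theorem applies under exactly the regularity assumed, and that the strict convexity extends up to the endpoints $t=0,1$ as \cref{thm:ring-intro} requires. To handle it, I would follow the observation made for \cref{cor:p-ring}. The pointwise and injectivity parts of the proof of \cref{thm:ring-intro} use only the intermediate levels; the boundary levels are used only for the endpoint description $\rho_0<\rho_1$, and that description follows from continuity of $Du$ on $\overline\Omega$ together with \eqref{eq:s-monotone}.

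\emph{Conclusion.} \cref{thm:ring-intro} then gives $(-1)^{n-1}\det D^2u>0$, inertia $(1,n-1)$, and a global diffeomorphism onto the ring \eqref{eq:gradient-ring}. Finally, since $a_{\mathrm{ms}}$ is real analytic, analytic elliptic regularity gives $u\in C^\omega(\Omega)$. The analytic inverse function theorem, applied where $\det D^2u\ne0$, then shows that the global inverse is real analytic.
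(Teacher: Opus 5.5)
Your proposal is correct and follows essentially the same route as the paper: comparison for uniqueness, then Korevaar's convex-ring theory (with the Wang--Zhang extension) for noncriticality and strict convexity of the intermediate levels, then the Hopf lemma on $\partial\Omega$, then \cref{thm:ring-intro} with $a=a_{\mathrm{ms}}$, and finally analytic regularity with the analytic inverse function theorem. Two small remarks. If you route strictness through \cref{cor:korevaar-ring}, you need one point where the second fundamental form is positive definite; this follows for levels near $t=1$ from strict convexity of $\partial\Omega_1$ and $u\in C^2(\overline\Omega)$. Also, your ``main obstacle'' is moot: as you yourself note, $K_0=\overline{\Omega_0}$ and $K_1=\overline{\Omega_1}$ are strictly convex by hypothesis, so \cref{thm:ring-intro} applies directly.
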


\begin{proof}
Uniqueness follows from the comparison principle.  For the assumed smooth
solution, the convex-ring theory gives noncriticality and strict convexity of
all intermediate level hypersurfaces \cite{Korevaar,WangDekaiZhang}.
The Hopf boundary point lemma gives $|Du|>0$ on $\partial\Omega$.
Apply \cref{thm:ring-intro} with $a=a_{\mathrm{ms}}$.
Since $a_{\mathrm{ms}}$ is real analytic in $\R^n$, analytic elliptic
regularity and the analytic inverse function theorem give the real-analytic
conclusion.
\end{proof}

\begin{samepage}
\begin{remark}
In dimension three, \cref{cor:min-ring} gives
\[
 \det D^2u>0.
\]
The restriction of $D^2u$ to the tangent space of each level hypersurface is
negative definite, while the full Hessian has inertia $(1,2)$.  In the
rotationally invariant case \eqref{eq:ams}, the equation also gives
\[
 u_{\nu\nu}=-(1+|Du|^2)\tr(D^2u|_{\nu^\perp})>0,
\]
but the general Schur-complement proof is stronger because it also controls the
mixed normal--tangential derivatives and works for anisotropic $a$.
\end{remark}
\end{samepage}

\section*{Acknowledgments}
The authors thank Professor Xi-Nan Ma for bringing this problem to their
attention.  This work was supported by the Natural Science Foundation of
Heilongjiang Province (Grant No. PL2025A006).  The authors acknowledge the use
of AI tools and take full responsibility for the content of the manuscript,
including all mathematical statements and proofs.

\end{document}